\documentclass{amsart}

\usepackage{lineno,hyperref}

\usepackage{graphicx}
\usepackage{amssymb}
\usepackage[utf8]{inputenc}     
\usepackage{comment}
\usepackage{amsfonts}
\usepackage{amsthm}
\usepackage{amssymb}
\usepackage{amsmath}
\usepackage{enumerate}
\usepackage{mathtools}
\usepackage{tikz}
\usepackage{multirow}
\usepackage{color}
\binoppenalty=10000 
\relpenalty=10000
\pretolerance=10000
\newtheorem{theorem}{Theorem}

\newtheorem{lemma}[theorem]{Lemma}

\newtheorem{proposition}[theorem]{Proposition}
\newtheorem{corollary}[theorem]{Corollary}
\everymath{\displaystyle }
\allowdisplaybreaks


\title{On the number of  words \\with restrictions on the  number of symbols}
\thanks{The authors are members of LIA SINFIN (ex-INFINIS), 
Universit\'e de Paris-CNRS/Universidad de Buenos Aires-CONICET
and they are part of the   STIC AMSUD project 20STIC-06.
Becher is supported  by  PICT2018-02315 and Cesaratto  is partially supported by Grant UNGS 30/3307.}

\author{Ver\'onica Becher}
\address{V. Becher \\ Departamento de  Computaci\'on, Facultad de Ciencias Exactas y Naturales \& ICC  \\
 Universidad de Buenos Aires \&  CONICET, Argentina}
\email{vbecher@dc.uba.ar}
\author{Eda Cesaratto}
 
\address{E. Cesaratto\\ Universidad Nac. de Gral. Sarmiento \& CONICET, Argentina}
\email{ecesaratto@campus.ungs.edu.ar}

\begin{document}

\begin{abstract}
We show that, in an alphabet of $n$ symbols,  the number of words  of length~$n$ 
whose  number of different symbols is away from $(1-1/e)n$,
which is the value expected  by the Poisson distribution,  has  exponential decay in $n$.
We use   Laplace's method for sums and
 known bounds of Stirling numbers of the second kind.
We express our result in terms of inequalities.
\end{abstract}

 \maketitle

\noindent
{\bf MSC 2020}: 05A05, 05A10, 05A20
\bigskip

\noindent
{\bf Keywords}:  Poisson distribution; Laplace method for sums; Stirling numbers of the second kind; Combinatorics on words.
\bigskip

\section{Introduction and statement of results}

Consider an alphabet of $n$ symbols and 
let $\chi^{(i)}$ be the number of symbols  that appear exactly~$i$ times in a word of length $m$. This can be seen as the allocation of $m$ balls (the  positions in a word of length $m$) in $n$ bins (the $n$~symbols of the alphabet), which determines a total  of   $n^{m}$  allocations. When  $m/n$  is a fixed constant~$\lambda$,
\[
\frac{1}{n}\chi^{(i)} \text{ converges in probability to }e^{-\lambda} \frac{\lambda^i}{i!},
\]
which is the Poisson formula, the proof can be read from~\cite[Example III.10 and  Proposition~V.11]{Flajolet&Sedgewick:2009}. 

We are interested in the case when the alphabet size $n$ equals the word length $m$, hence  $\lambda=m/n=1$.
The number  of symbols that do {\em not} appear in a  word of length $n$ is  $\chi^{(0)}$ and 
its expected value is~$n/e$.  
Hence, the  expected  number of different symbols in a word of length~$n$ is $n-n/e=(1-1/e)n$. 
The probability that   $\chi^{(0)}$ is equal to  $j$ for $j=0, 1, \ldots, n$  is expressible in terms of Stirling 
numbers of the second kind:  the number  $a(n,j)$ of words of length $n$ with {\em exactly}~$j$ different symbols
 is the  number of ways to choose $j$ out of $n$ elements  times  the number 
 of surjective maps from a set of $n$  positions to a set of $j$  symbols.
To  make such a surjective map,  first partition the set of $n$ elements  into $j$ 
nonempty subsets and, in one of  the $j!$ many ways, 
assign one of these subsets to  each element  in the set of~$j$ elements,
\[
a(n,j)=   
\binom{n}{j} j!\  S_n^{(j)},
\]
where 
 \[
S_n^{(j)}=\frac{1}{j!} \sum_{i=0}^{j} (-1)^i  
\binom{j}{i}
(j-i)^n.
\]
Notice that 
\[
\sum_{j=0}^{n}  a(n,j)= n^n. 
\]

Theorem~\ref{thm:a(n,j)}  is  the main result of this note and 
shows that in an alphabet of $n$ symbols,  
the number of words of length $n$ with exactly $j$ symbols, 
has exponential decay in $n$ 
when~$j$ is away from the value expected by the Poisson distribution.
Precisely, Theorem~\ref{thm:a(n,j)} proves that 
$a(n,j)$,  has exponential decay in $n$ 
 when $j$ is away from~$(1-1/e)n$. 
And this implies  that  for every positive~$\varepsilon<1$,
\[
\sum_{n\geq 1}   n^{-n} \left(\sum_{j=1}^{(1-1/e - \varepsilon)n} a(n,j) + \sum_{j=(1-1/e + \varepsilon)n}^{n} a(n,j)\right)< \infty.
\]


\begin{theorem}\label{thm:a(n,j)} 
There is a function $\phi: (0,1)\mapsto \mathbb R$ such that 
$\phi(x)<1$ for every $x\not=1-1/e$,
 positive reals  $r$ and $\Lambda$ both less than $1$, 
and positive constants $c$ and $C$
 satisfying the following condition: 
\nopagebreak
\noindent 
For every pair $n,j$ of  integers with  $1\le j\le n$, 
\begin{align*}
a(n,j) &\le 
\begin{cases} C \sqrt{n} \Lambda^n n^{n}  \ \ \ & \hbox{, if }
j/n\in [0,r]\cup [1-r,1] 
\\ 
                      C\ \phi(j/n)^n        n^{n}    \ \ \  & \hbox{, if }
                       j/n\in [r, 1-r]
                      \\
\end{cases}
\\a(n,j)  &\ge\  (c/ \sqrt{n})\, \phi(j/n)^n n^{n}\ \hbox{, if } 
j/n\in [r, 1-r].
\end{align*}
\nopagebreak
Precisely,
\begin{align*}
\phi:(0,1)\mapsto \mathbb R,  & \  \  \ \phi(x)=(e\ln(1+e^{-\delta(x)})^{-1}\varphi(x)e^{-x\delta(x)}
\\
\varphi:[0,1]\mapsto \mathbb R,& \  \ \  \varphi(x)=x^{-x}(1-x)^{-(1-x)},\  \varphi(0)=\varphi(1)=1
\\
\delta:(0,1)\mapsto \mathbb R,&  \ \ \  \delta^{-1} (y) = \frac{1}{(1+e^{y})\ln(1+e^{-y})}.
\end{align*}
\end{theorem}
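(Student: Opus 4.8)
The plan is to treat the two factors of $a(n,j)=\binom{n}{j}\,j!\,S_n^{(j)}$ separately on the exponential scale, reserving the Laplace/saddle-point analysis for the surjection count $j!\,S_n^{(j)}$. The binomial factor is handled by Stirling's formula, which gives, uniformly for $j/n\in[r,1-r]$, the two-sided estimate $\binom{n}{j}=\Theta(n^{-1/2})\,\varphi(j/n)^{n}$ with $\varphi$ exactly the function in the statement. This already isolates the $\varphi(x)^{n}$ contribution, so all the remaining content lies in the growth rate of the number of surjections from a set of $n$ positions onto a set of $j$ symbols.

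For the surjections I would use the representation $j!\,S_n^{(j)}=n!\,[z^{n}](e^{z}-1)^{j}$ and estimate the coefficient by a saddle-point (Laplace) analysis, or equivalently invoke known two-sided bounds for the Stirling numbers of the second kind. Writing $j=xn$, the saddle $s=s(x)>0$ is the root of $x=(1-e^{-s})/s$, and the coefficient estimate gives $j!\,S_n^{(j)}=\Theta(1)\,(n/e)^{n}\,(e^{s}-1)^{xn}\,s^{-n}$ uniformly on compact subintervals of $(0,1)$. Multiplying by the binomial estimate yields
\[
a(n,j)=\Theta(n^{-1/2})\,n^{n}\,\Bigl[\varphi(x)\,(e^{s}-1)^{x}/(e\,s)\Bigr]^{n}.
\]
The change of variable $e^{s}=1+e^{-y}$, that is $s=\ln(1+e^{-y})$ and $x=1/\bigl((1+e^{y})\ln(1+e^{-y})\bigr)$, is precisely the relation $x=\delta^{-1}(y)$; it turns the bracket into $\phi(x)$ and exhibits $\delta$ as the inverse of the saddle equation. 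The uniform $\Theta(n^{-1/2})$ prefactor produces the constants $c$ and $C$ of the two-sided bound on $[r,1-r]$. A short computation then pins down the shape of $\phi$: at $s=1$ one has $x=1-1/e$ and the bracket equals $1$, and differentiating $\ln\phi$ along the saddle relation shows this is the unique critical point and a strict maximum, so $\phi(x)<1$ for all $x\neq 1-1/e$.

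Next I would isolate the endpoint windows, where the estimate degrades because the saddle escapes, $s(x)\to\infty$ as $x\to0^{+}$ and $s(x)\to0^{+}$ as $x\to1^{-}$. For small $j$ the trivial inequality $j!\,S_n^{(j)}\le j^{n}$ combined with the Stirling estimate for $\binom{n}{j}$ gives $a(n,j)\le C\sqrt{n}\,(x\,\varphi(x))^{n}n^{n}$, and since $x\,\varphi(x)\to0$ as $x\to0$ one fixes $r$ small enough that $x\,\varphi(x)\le\Lambda<1$ on $[0,r]$. For $j$ near $n$ I would instead use that $S_n^{(n-k)}$ is a polynomial of degree $2k$ in $n$, controlled by a near-$n^{2k}/(2^{k}k!)$ term, so that $a(n,n-k)$ decays at a rate tending to $1/e$ as $k/n\to0$; taking $r$ small makes this rate at most $\Lambda<1$ on $[1-r,1]$ as well. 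Choosing $\Lambda$ to dominate both endpoint rates gives the uniform geometric bound $C\sqrt{n}\,\Lambda^{n}n^{n}$ on $[0,r]\cup[1-r,1]$.

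The main obstacle I anticipate is uniformity. The saddle-point (equivalently, Stirling-number) estimates must hold with $n$-independent implied constants, uniformly over the compact range $[r,1-r]$, and the error terms must be genuine inequalities rather than $o(1)$ asymptotics so that the conclusion can be stated with explicit $c$, $C$, $\Lambda$. Concretely this requires checking that the several $x$-dependent quantities, namely $s(x)$, the saddle variance $s^{-2}-\bigl(s(e^{s}-1)\bigr)^{-1}$ (positive since $e^{s}-1>s$, but tending to $0$ as $x\to0$ and to $\infty$ as $x\to1$), and $x(1-x)$, remain bounded away from their degenerate values on $[r,1-r]$, and it requires verifying quantitatively that the two excluded endpoint rates stay strictly below $1$. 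The right endpoint, where the surjectivity constraint is severe and the crude bound $j!\,S_n^{(j)}\le j^{n}$ is far too lossy, is the more delicate of the two and is the step I expect to demand the most care.
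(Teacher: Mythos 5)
Your proposal is sound, and in the central range $j/n\in[r,1-r]$ it is in substance the same argument as the paper's: the saddle-point estimate you propose for $j!\,S_n^{(j)}=n!\,[z^n](e^z-1)^j$ \emph{is} Bender's asymptotic for $S_n^{(j)}$, which the paper imports as a black box (Lemma~\ref{Bender}, Corollary~\ref{corolarioBender}) and then multiplies by $n!/(n-j)!$ via Stirling (Lemma~\ref{lemma:a(n,j)}); your change of variables $e^{s}=1+e^{-y}$ is exactly the dictionary between your saddle parameter $s$ and the paper's $\alpha=\delta(x)$ (indeed $s=\rho=\ln(1+e^{-\alpha})$, and $x=(1-e^{-s})/s$ is the paper's relation $n/j=(1+e^{\alpha})\ln(1+e^{-\alpha})$), so both routes yield the same $\phi$. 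Two points of comparison. First, the tails are where you genuinely diverge from the paper, and where your sketch has its one soft spot. The paper covers both endpoints with a single inequality, Rennie--Dobson's bound \eqref{Rennie}, which combined with the binomial estimates gives $a(n,j)n^{-n}\le \sqrt{j}\,\nu(j/n)^n$ with $\nu(x)=x e^{-x}\varphi(x)^2$ (Lemmas~\ref{nu1} and~\ref{colitas}); since $\nu(0)=0$ and $\nu(1)=e^{-1}$, one $\Lambda<1$ handles $[0,r]\cup[1-r,1]$ at once. You instead use the trivial surjection bound $j!\,S_n^{(j)}\le j^n$ on the left (correct: $x\varphi(x)\to 0$), but on the right you invoke the fact that $S_n^{(n-k)}$ is a polynomial of degree $2k$ in $n$ with leading term $n^{2k}/(2^k k!)$. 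As stated this is a fixed-$k$ asymptotic, whereas here $k=n-j$ grows linearly in $n$, so you need an honest inequality uniform over $k\le rn$. Such an inequality is available and closes your argument --- e.g.\ Rennie--Dobson again:
\begin{equation*}
S_n^{(n-k)}\le \tfrac12\binom{n}{k}(n-k)^k\le \frac{n^{2k}}{2\,k!},
\qquad\text{whence}\qquad
n^{-n}a(n,n-k)\le C\sqrt{n}\,\Bigl[e^{-1}\bigl(e/(1-x)\bigr)^{2(1-x)}\Bigr]^n
\end{equation*}
with the bracket tending to $e^{-1}<1$ as $x\to 1^-$ --- so the step you yourself flagged as the most delicate is precisely where the paper's single lemma does the work. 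Second, for the claim $\phi(x)<1$ off $x=1-1/e$ you differentiate $\ln\phi$ along the saddle relation, which works; the paper gets $\phi\le 1$ for free from its \emph{lower} bound together with $a(n,j)\le n^n$, a slicker observation worth noting. Finally, a caveat that applies to both routes: Bender's estimate (equivalently your saddle-point analysis with $\Theta(1)$ constants) is asymptotic, valid for $n\ge n_0(r)$ only, so to obtain the theorem's inequalities for \emph{all} $n$ one must absorb the finitely many pairs $(n,j)$ with $n<n_0$ into the constants $c,C$, as the paper does implicitly in passing from Lemma~\ref{Bender} to Corollary~\ref{corolarioBender}.
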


Each of the values $c,C,\Lambda$ and $r$ in the statement of Theorem~\ref{thm:a(n,j)}  can be effectively computed.
Figure~\ref{Fig:Thm1} plots the upper bound of $\sqrt[n]{a(n,j)} n^{-1}$  with  the function $\phi(j/n)$ given in Theorem~\ref{thm:a(n,j)}.
\medskip

As a straightforward application   of Theorem~\ref{thm:a(n,j)} we obtain the following.

\begin{figure}
\begin{center}
 \includegraphics{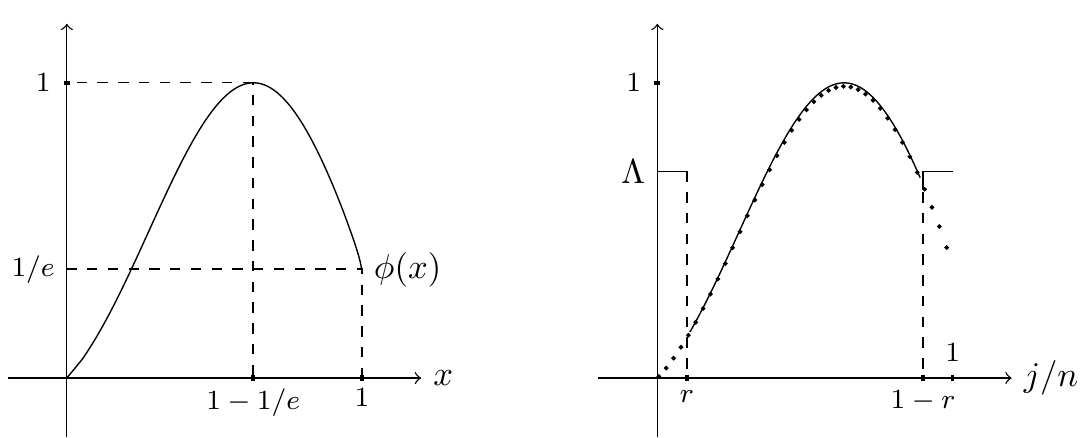}
\end{center}
\caption{On the left, the graph of $\phi(x)$. 
On the right,  the points are $\sqrt[n]{a(n,j)} n^{-1}$ for $n=200$ and $j=0,5,10,\dots,195,200$
 and  the solid line is $\phi(j/n)$  with $r=0.1$ and $\Lambda\approx 0.701$.} 
\label{Fig:Thm1}
\end{figure}

\begin{corollary}\label{cor:sum} 
For any positive real number $\varepsilon$ there exist positive constants $c$ and $C$ 
and a positive real number~$\Lambda $ strictly less than $1$ such that for every positive integers~$n,\ell$,
\[
\text{if } \ \  |\ell/n -( 1- 1/e)|\ge \varepsilon \ \  \text{ then }\ \ 
 (c/\sqrt{n})\ \Lambda^n  
\leq  n^{-n} \sum_{j=1}^{\ell} a(n,j) 
\leq  C n \sqrt{n} \ \Lambda^n.
\]
\end{corollary}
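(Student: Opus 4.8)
The plan is to exploit that $\sum_{j=1}^{\ell}a(n,j)$ is a sum of at most $n$ nonnegative terms, so that it is squeezed between its largest summand and $n$ times its largest summand; up to a polynomial factor in $n$ the whole question therefore reduces to locating and estimating $\max_j a(n,j)$ over the summation range by means of Theorem~\ref{thm:a(n,j)}. The exponential rate of the $j$-th term is $\phi(j/n)$, and since $\phi$ is continuous on $(0,1)$ with $\phi(x)<1$ for every $x\neq 1-1/e$, on any compact subinterval $K\subset(0,1)$ with $1-1/e\notin K$ the number $\max_{x\in K}\phi(x)$ is a constant strictly below $1$. This, together with the constant supplied by the theorem for the near-boundary indices (which I rename $\Lambda_0$), is what will produce the single base $\Lambda<1$ in the statement.

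For the upper bound I would first treat the left-hand regime $\ell/n\le 1-1/e-\varepsilon$, and split the sum at $j=\lceil rn\rceil$. For the near-boundary indices $j/n\in[0,r]$ the first case of Theorem~\ref{thm:a(n,j)} gives $a(n,j)\le C\sqrt n\,\Lambda_0^n n^n$, and there are at most $n$ of them. For the remaining indices $j/n\in[r,\ell/n]\subseteq[r,1-1/e-\varepsilon]$ the second case gives $a(n,j)\le C\phi(j/n)^n n^n\le C\Lambda_1^n n^n$, where $\Lambda_1=\max_{x\in[r,1-1/e-\varepsilon]}\phi(x)<1$ by the compactness remark above. Setting $\Lambda:=\max(\Lambda_0,\Lambda_1)<1$ and bounding the number of terms by $n$ yields $n^{-n}\sum_{j=1}^{\ell}a(n,j)\le Cn\sqrt n\,\Lambda^n$, as required. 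The right-hand regime $\ell/n\ge 1-1/e+\varepsilon$ is entirely symmetric, the relevant object being the tail $\sum_{j=\ell}^{n}a(n,j)$ that appears in the two-sided sum of the introduction: its near-boundary part $j/n\in[1-r,1]$ is again controlled by the first case of the theorem, and its middle part by $\Lambda_2=\max_{x\in[1-1/e+\varepsilon,1-r]}\phi(x)<1$. Taking $\Lambda=\max(\Lambda_0,\Lambda_1,\Lambda_2)$ covers both regimes at once.

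For the lower bound I would simply discard all summands but one, choosing the index $j^\star$ of the range nearest to $(1-1/e)n$ and lying in the middle window $[r,1-r]$; concretely $j^\star=\ell$ in the left-hand regime, provided $r$ has been fixed with $r<1-1/e-\varepsilon$ (possible since $\varepsilon$ is fixed). By the unimodality of $\phi$, which increases on $(0,1-1/e)$ and decreases on $(1-1/e,1)$, the endpoint $j^\star/n$ is the point of the range whose $\phi$-value is largest and realises the base $\Lambda$. The lower-bound clause of Theorem~\ref{thm:a(n,j)} then gives $a(n,j^\star)\ge (c/\sqrt n)\phi(j^\star/n)^n n^n$, whence $n^{-n}\sum_{j=1}^{\ell}a(n,j)\ge (c/\sqrt n)\Lambda^n$.

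The step I expect to be delicate is exactly this matching of the two sides. The upper bound only needs $\phi(j/n)\le\Lambda$, which holds comfortably for every summand; but the lower bound needs a single summand whose rate is not smaller than $\Lambda$, and this forces the dominant index to sit at the endpoint nearest the peak and, crucially, inside the window $[r,1-r]$, where the theorem supplies a two-sided estimate — the near-boundary clause provides no lower bound at all. Thus the genuine work is to verify that $j^\star$ lands in $[r,1-r]$ and that $\phi$'s monotonicity on each side of $1-1/e$ makes the endpoint the maximiser; one should also note that when $\ell/n<r$ no middle-window summand is available, so the lower bound is to be read for $\ell$ with $\ell/n\ge r$ (or else supplemented by a direct elementary estimate in the near-boundary range). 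Everything else is the routine bookkeeping of counting at most $n$ terms and absorbing polynomial factors.
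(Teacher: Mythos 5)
Your approach is the same as the paper's: its entire proof of Corollary~\ref{cor:sum} is the sandwich
\[
\max\{n^{-n}a(n,j):1\le j\le \ell\}\;\le\; n^{-n}\sum_{j=1}^{\ell}a(n,j)\;\le\; n\,\max\{n^{-n}a(n,j):1\le j\le \ell\},
\]
followed by the words ``direct application of Theorem~\ref{thm:a(n,j)}''. What you flag as the delicate step is a genuine gap, one the paper's one-line proof does not address, and one that cannot be closed, because the statement as written is false. Two checks: for $\ell=n$ the hypothesis holds (once $\varepsilon\le 1/e$) and $n^{-n}\sum_{j=1}^{n}a(n,j)=1$, which for large $n$ violates the upper bound $Cn\sqrt{n}\,\Lambda^n$ with $\Lambda<1$; for $\ell=1$ and $n$ large the hypothesis also holds and $n^{-n}\sum_{j=1}^{1}a(n,j)=n^{1-n}$, which for large $n$ violates the lower bound $(c/\sqrt{n})\,\Lambda^n$ for any fixed $\Lambda>0$. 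So your two amendments --- reading the right-hand regime as the tail $\sum_{j=\ell}^{n}a(n,j)$, and restricting the lower bound to $\ell\ge rn$ --- are not optional refinements but exactly the corrections the statement needs.

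One claim in your write-up is itself wrong, though: that the endpoint summand $j^{\star}=\ell$ ``realises the base $\Lambda$''. By Theorem~\ref{thm:a(n,j)} it realises $\phi(\ell/n)$, and in the left regime $\phi(\ell/n)\le\phi(1-1/e-\varepsilon)$ with equality only at the extreme ratio; meanwhile your own upper bound forces $\Lambda\ge\Lambda_1=\phi(1-1/e-\varepsilon)$, and indeed any admissible upper-bound base must be at least this large, since the sum contains a term of that order when $\ell/n$ is close to $1-1/e-\varepsilon$. Hence for $\ell/n$ held near $r$ the sum is of order $\phi(r)^n$ up to polynomial factors, exponentially smaller than $\Lambda^n$, and no single base can serve both inequalities uniformly in $(n,\ell)$. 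The repaired statement must either use two bases $\Lambda_{\mathrm{low}}<\Lambda_{\mathrm{up}}<1$ (lower bound $(c/\sqrt{n})\Lambda_{\mathrm{low}}^n$, upper bound $Cn\sqrt{n}\,\Lambda_{\mathrm{up}}^n$), or let the common rate be the $\ell$-dependent quantity $\max\left(\Lambda_0,\phi(\ell/n)\right)$. For the application sketched in the introduction (Borel--Cantelli) only the upper bound matters, and your proof of that half, with the tail reinterpretation in the right regime, is correct.
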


A  tail estimate is  a quantification of   the rate of decrease of probabilities away from the central part of a distribution.
It is known that the tail of   a given arbitrary  discrete distribution  has exponential decay  if   its probability generating function is analytic on a disk centered on zero and of radius
greater than $1$~\cite[Theorem IX.3, page 627]{Flajolet&Sedgewick:2009}.    Theorem~\ref{thm:a(n,j)} gives, indeed, a tail estimate with exponential decay, but our methods are not analytic. 

Our  proof  of Theorem~\ref{thm:a(n,j)} is elementary except for the estimates for Stirling numbers of the second kind that we use as a black box.
We follow the principles of  Laplace's method for sums,
which is useful for  sums of positive terms which increase to a certain point and then decrease.  
For a general explanation with examples  we refer to Flajolet and Sedgewick's 
book~\cite[p.761]{Flajolet&Sedgewick:2009}, 
see~\cite{Paris} for a rigorous application to an hypergeometric-type series. 
However, we do not use the exp-log transformation to build the approximation function.

Specifically, to prove Theorem~\ref{thm:a(n,j)} we give a smooth function $\phi$ so that $\phi(j/n)^n$ bounds    
$a(n,j)n^{-n}$ from above and below  
(up to  multiplicative  sequences that increase or decrease slowly). 
We consider  the ratio between $j$ and~$n$.
When $j$ is near  to~$0$  or near to $n$
we  use the classical 
upper bound of Stirling numbers of the second kind 
given by  Rennie and Dobson~\cite{Rennie}.
When  $j$ 
is not near to $0$ nor near  to $n$  we use Bender's 
approximation of Stirling numbers of the second kind~\cite{Bender} as a  black box.  
This approximation comes from analytic combinatorics methods and 
it was initially devised by  Laplace,  then  proved 
by Moser and Wyman~\cite{mw} and later sharpened by Bender, see also~\cite{l}. 
Our two choices are motivated by  the  comparison of bounds on Stirling numbers by Rennie-Dobson \cite{Rennie}, Arratia and DeSalvo \cite{ADS}, and also a trivial bound, given in Section~\ref{sec:comparison}.

The approach we use in the proof of Theorem~\ref{thm:a(n,j)}   was previously used 
by one of the authors in two different problems. 
In~\cite{Edatesis} it is used  to  estimate  $n!\prod_{i=1}^k p_i^{ j_is}/j_i!$ 
where each  $p_i$ is the probability of the symbol~$i$ in an alphabet of $k$ elements,  
$s$ is a real number in $(0,1)$
and the integers $j_i$ 
sum up $n$ and  $\sum_{i=1}^k i j_i\le M n$ for a fixed $M>1$. In~\cite[Remark 4.3]{CeMaPePri}  the  same approach is used to obtain an upper bound for   $ \binom{n}{j}/j!$ 
when $n$ is fixed and $j$~varies. 
Besides, the asymptotic behavior of these quantities  when $n$ tends to infinity
was studied using a similar technique  in~\cite{LiPi}.

We crossed the  problem  solved in the present note
when studying  the set $\mathcal S$  of  infinite binary sequences
with too many or too few, with respect to the 
expected by the Poisson distribution,
 different words of length  $\lfloor \log n \rfloor$, counted  with no overlapping
 in their initial 
segment of length $n \lfloor \log n \rfloor$, 
for infinitely many $n$s.
Corollary~\ref{cor:sum}   allows us to prove  that the 
Lebesgue measure of this set~${\mathcal S}$ is zero, as follows.
For simplicity,  
let~$n$ be a power of $2$ and  let $\log$ be the  logarithm in base~$2$.
Identify  the  binary words of length $\log n $
with   integers from  $0$ to $n-1$.
Thus,   each binary word of length $n \log n$
is identified with a with a word of  $n$ integers from  $0$ to $n-1$.
Notice that there  are $2^{n \log n } = n^n$ many of these  binary words.
Corollary~\ref{cor:sum}  assumes an alphabet of $n$ symbols
and gives an upper bound for the proportion
 of  words of length~$n$ having a number of different symbols away from  $(1-1/e)n$, 
which is the quantity expected by the Poisson distribution.
By the identification we made, this yields  
an upper bound of the  proportion of binary words of length
 $n\log n  $  having   too many or too few  different binary blocks 
with respect to what is  expected by the Poisson distribution.
Since this upper  bound has exponential decay in $n$, 
we can  apply Borel-Cantelli lemma to show that  the sum, for every $n$, 
of these bounds is finite.  
Consequently, the  Lebesgue measure of the set ${\mathcal S}$   is zero.
A different proof of this result  follows from the metric theorem  given by  Benjamin Weiss and Yuval Peres in~\cite{weiss2020} where they show that the set of Poisson generic  sequences on a finite alphabet  has  Lebesgue measure $1$. Their proof is probabilistic, with   a randomized part and a concentration part.

\section{On different bounds on Stirling numbers of second kind} \label{sec:comparison}

We compare four estimates on Stirling numbers of the second kind $S_n^{(j)}$. When $j/n$ belongs to~$(0,1)$, we consider   a trivial bound, 
Rennie and Dobson's bound~\cite{Rennie}  and 
Arratia and DeSalvo's bounds~\cite{ADS}. When $j/n$ belongs to a closed interval included in $(0,1)$, we consider   Bender's estimate~\cite{Bender}.
We start by giving  bounds for the   binomial coefficients.

\subsection{Binomial coefficients}
Consider the following bounds for the factorial   which are consequence of the classical Stirling's formula for the factorial, 
see~\cite{Robbins},
\[
n!=\sqrt{2\pi}n^{n+1/2}e^{-n+r_n}, \ \ \ 
\frac{1}{12n+1}\le r_n\le \frac{1}{12n}.
\]
Then, for any $n\ge 1$, 
\begin{equation}\label{StirlingRobbins}
\sqrt{2\pi}n^{n+1/2}e^{-n}\le n!\le \sqrt{2\pi}e^{1/12}n^{n+1/2}e^{-n} .
\end{equation}

In the sequel we write $a \approx b $ to indicate  that  the two numbers $a$ and $b$ coincide up to the precision  explicitly indicated, 
but they may differ in the fractional part that is not exhibited. For example, $\pi\approx 3.14159$.
From this approximation of the factorial, we obtain  bounds for the binomial coefficient that involve  the following functions,
\begin{align}\label{combinatorial}
 \varphi:[0,1]\mapsto \mathbb R, & \quad \varphi(x)=x^{-x}(1-x)^{-(1-x)},\  \varphi(0)=\varphi(1)=1;  \\
\gamma:(0,1)\mapsto \mathbb R, & \quad \gamma(x)=(x-x^2)^{1/2}  \nonumber
\end{align}

There exist constants $c_0$ and $C_0$ such that  for any pair of integers $n,j$ where  $n\ge 2$ and $1\le j\le n-1$,  
\[
\frac{c_0}{\sqrt{n}\gamma(j/n)}\varphi(j/n)^n\le \binom{n}{j}\le \frac{C_0}{\sqrt{n}\gamma(j/n)}\varphi(j/n)^n\ . 
 \]
The constants $c_0$ and $C_0$ can be chosen as  $c_0=(\sqrt{2\pi}e^{1/6})^{-1}\approx 0.33$ and $C_0=e^{1/12}(\sqrt{2\pi})^{-1}\approx 0.43 $.
From~\eqref{StirlingRobbins}, it follows that  
\begin{equation}\label{Eq:binup}
\binom{n}{j}\le e^{1/12}(\sqrt{2\pi})^{-1}\left(\frac{n }{j(n-j)}\right)^{1/2}\frac{n^n}{j^j(n-j)^{n-j}}. 
\end{equation}
First, notice that 
\[\left(\frac{n }{j(n-j)}\right)^{1/2}= \frac{n^ {1/2}}{{n}(j/n (1-j/n))^{1/2}}=\frac{1}{\sqrt{n}\gamma(j/n)}.
\]
Now, we deal with the last factor of~\eqref{Eq:binup}. The following holds:
\begin{align*}
\frac{n^n}{j^j(n-j)^{n-j}}&=\frac{n^n}{n^n(j/n)^{j}(1-j/n)^{n-j}}\\
                           &=\left({(j/n)^{-j/n}(1-j/n)^{-(1-j/n)}}\right)^{n}\\
                           &=\varphi(j/n)^{n}.
\end{align*}
This proves the upper bound on the binomial coefficient. The proof of the  lower  bound is similar, except that  the factor    $e^{1/12}$ appears twice in the denominator.

Finally, we remark that for any pair of  positive integers $n,j$ such that  $n\ge 2$  and  $1\le j\le n-1$, we have 
$\min\{j(n-j): 1\le j\le n-1\}=n-1$ (this value is attained at  $j=1$ or $j=n-1$). Also   $n-1\ge n/ 2$ for $n\ge 2$. Hence, 
\begin{align*}\gamma(j/n)&= \left(\frac{j(n-j)}{n^2}\right)^{1/2}\ge \left(\frac{n}{2n^2}\right)^{1/2}=\frac{\sqrt{2}}{2}n^{-1/2}, \text{ and }\\
\gamma(j/n)&\le \max\{ {\gamma(x): x\in [0,1]}\}= \max\{ {(x-x^2)^{1/2}: x\in [0,1]}\}\le 1/2.
\end{align*}
Thus, multiplying by $\sqrt n$,
\[
   {\sqrt{2}}/2\le \sqrt{n}\gamma(j/n)\le (1/2)\sqrt{n}, 
 \]
which implies 
 \[
   \frac{2}{\sqrt n}\le \frac1{\sqrt{n}\gamma(j/n)}\le {\sqrt 2}.  \]
We have that $2c_0 \approx 0.67> 1/2$ and $\sqrt 2 C_0\approx 0.61<1$. 
This shows {the following  inequalities}, for every positive  $n\ge 2$  and  every $j$ such that  $1\le j\le n-1$,
\begin{equation}\label{binom}
\frac{1}{{2}\sqrt{n}}\varphi(j/n)^n\le  \frac{2c_0}{\sqrt n} \varphi(j/n)^n \le   \binom{n}{j}\le \sqrt{2}C_0 \varphi(j/n)^n\le \varphi(j/n)^n .
\end{equation}

\subsection{A trivial bound on Stirling numbers the second kind}\label{s:trivial}

The simplest upper bound 
takes just the  first term  of the alternating sum that defines $S_n^{(j)}$, \[
S_n^{(j)}\le j^n/j!.
\]
This upper bound appears explicitly taking  just  one term in Bonferroni inequalities,
 see~\cite[Section 4.7]{comtet}. 
First remark that the upper bound  given in ~\eqref{StirlingRobbins} for the factorial  yields 
\begin{equation}
\label{tb}
\frac{j^n}{j!}\le \frac{j^n}{\sqrt{2\pi j}j^je^{-j}}=\frac{1}{\sqrt{2\pi j}} \frac{n^{n-j}(j/n)^ne^{j}}{(j/n)^j } = \frac{1}{\sqrt{2\pi j}} \left(n^{1-j/n}(j/n)^{1-j/n}e^{j/n} \right)^n.
\end{equation}
The same lines together with the lower bound for~\eqref{StirlingRobbins} give a lower bound for~$j^n/j!$.

Let $\theta:[0,1]\mapsto \mathbb R$, 
\begin{equation}\label{theta}
\theta(x)=x^{1-x}e^x.
\end{equation}
It follows that  
\[
 \frac{1}{e^{1/12}\sqrt{2\pi j}}\left(n^{1-j/n}\theta(j/n)\right)^n\le 
 j^n/j!\le \frac{1}{\sqrt{2\pi j}}\left(n^{1-j/n}\theta(j/n)\right)^n. 
\]
Consequently,
\begin{equation}\label{trivial}
 S_n^{(j)}\le \frac{1}{\sqrt{2\pi j}}\left(n^{1-j/n}\theta(j/n)\right)^n.
\end{equation}

\subsection{Rennie and Dobson's bound}
The following  is the classsical upper bound of  Stirling numbers of the second kind  
given by Rennie and Dobson~\cite{Rennie},
which holds for every positive $n$ and  every $j$ such that $1\le j\le n-1$,
\begin{equation}\label{Rennie}
 S_{n}^{(j)}\le \frac{1}{2} \binom{n}{j} j^{n-j}.
\end{equation}

Let $\eta:[0,1]\mapsto \mathbb{R}$, 
\begin{equation}\label{eta}
\eta(x)=x^{1-x}\varphi(x),
\end{equation}
where $\varphi$ is defined in~\eqref{combinatorial}.
Since $j^{n-j}=(n^{1-j/n}(j/n)^{1-j/n})^n$, the bounds on the binomial given in~\eqref{binom} imply
\begin{equation}\label{Rennie-grafico}
 \frac{1}{{2}\sqrt n} \left(n^{1-j/n}\eta(j/n)\right)^n\le \binom{n}{j} j^{n-j}\le \left(n^{1-j/n}\eta(j/n)\right)^n.
\end{equation}

\subsection{Arratia and DeSalvo's bound}\label{s:Arratia}

Arratia and DeSalvo~\cite[Theorems 5 and 6]{ADS}
give these bounds  for  $n\ge 3$ and $1\le j \le n-2$,
\begin{align*}
 S_n^{(j)}\le& A_5(n,j)
 \\
S_n^{(j)}\le& A_6(n,j)
\end{align*}
where
\begin{align*}\label{A5}
A_5(n,j):=&\binom{N}{n-j}e^{-2\mu_5(n,j)}\left(1+e^{2\mu_5(n,j)}D_5(n,j)\right)
\\
A_6(n,j):=&\frac{N^{n-j}}{(n-j)!}e^{-\mu_6(n,j)}\left(1+e^{\mu_6(n,j)}D_6(n,j)\right)
\\
N:=&\binom{n}{2}
\\
 \mu_5(n,j):=&\binom{(n-j)}{2}\binom{n}{3}/ \binom{N}{2}
\\
\mu_6(n,j):=& \binom{(n-j)}{2} \frac{n(n-1)(4n-5)}{6 N^2}
\\
d_5(n,j):=&\ P+ Q+ (1- Q)((n-j)-2)(R+S+T) \text{ where }
\\
&
P:=\frac{2\binom{n}{3}}{\binom{N}{2}}
\\
&
Q:=
\frac{13-12(n-j)+3(n-j)^2}{\binom{N}{2}}
\\
&R:=
\frac{8\binom{n}{3}}{\binom{N}{2}}
\\
&S:=
\frac{6\binom{n}{4}}{\binom{n}{3}(N-2)}
\\& T:=
\frac{1}{(N-2)}\left(\frac{5n-11}{4}
\right)
\\
d_6(n,j):= &\  U+ 2(V+W+X) \text{ where }
\\&U:=\frac{n(n-1)(4n-5)}{6N^2}
\\
&V:=
4((n-j)-2)
\frac{n(n-1)(2n-1)}{6N^2}
\\
&W:=
\frac{3((n-j)-2) n (n-1)}{(4n-5)N}
\\\samepage
&Z:=\frac{2((n-j)-2)(2n-1)(n+1)}{(4n-5)N}
\\
D_5(n,j):=&\ \min\{d_5(n,j),2 \mu_5(n,j) d_5(n,j),1\}
\\
D_6(n,j):=&\ \min\{(d_6(n,j), 2\mu_6(n,j) d_6(n,j),1\}.
\end{align*}

The goal of this section is to give bounds on   $A_{5}(n,j)$ and $A_{6}(n,j)$ from below and above. They  are displayed in Proposition~\ref{Pro:ADS}, and the proofs of these bounds rely on Lemma~\ref{Le:ADS1} and Lemma~\ref{Le:ADS2}. 
In the sequel  when we write  $A_{5,6}$  we  denote two statements, 
one about $A_5$ and one about $A_6$.
Similarly for $D_{5,6}$ and~$\mu_{5,6}$

\begin{lemma}\label{Le:ADS1} 
For any $n\ge 3$ and $1\le j\le n-2$,
\[
 \frac{1}{{2}n^2}\le e^{-\mu_{5,6}(n,j)}\left(1+e^{\mu_{5,6}(n,j)}D_{5,6} (n,j)\right) \le 2.
\]
\end{lemma}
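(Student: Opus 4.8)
The plan is to exploit the algebraic identity
\[
e^{-\mu_{5,6}(n,j)}\bigl(1+e^{\mu_{5,6}(n,j)}D_{5,6}(n,j)\bigr)=e^{-\mu_{5,6}(n,j)}+D_{5,6}(n,j),
\]
which collapses the expression into a sum of two transparent terms. Throughout I would write $\mu=\mu_{5,6}(n,j)$, $d=d_{5,6}(n,j)$ and $D=D_{5,6}(n,j)=\min\{d,2\mu d,1\}$, and note that $n-j\ge 2$ on the range $1\le j\le n-2$, so every binomial coefficient entering $\mu$ and $d$ is positive and $\mu>0$. The upper bound is then immediate: since $\mu>0$ we have $e^{-\mu}<1$, and since $1$ is one of the three quantities in the minimum defining $D$ we have $D\le 1$; hence $e^{-\mu}+D<1+1=2$.

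For the lower bound the computational heart is to show $d\ge 0$ and in fact $d\ge 1/(2n^2)$. For the term $d_5=P+Q+(1-Q)((n-j)-2)(R+S+T)$, I would first verify that $Q\le 1$, so that the bracketed correction is nonnegative (the factors $(n-j)-2\ge 0$ and $R,S,T\ge 0$ being immediate), giving $d_5\ge P$; then simplifying the binomial ratio, using $N=\binom{n}{2}$ and $\binom{N}{2}=n(n-1)(n-2)(n+1)/8$, yields $P=2\binom{n}{3}/\binom{N}{2}=8/(3(n+1))$, which exceeds $1/(2n^2)$ for all $n\ge 3$. For $d_6=U+2(V+W+Z)$ the terms $V,W,Z$ are manifestly nonnegative (again because $(n-j)-2\ge 0$), so $d_6\ge U$, and a direct simplification gives $U=2(4n-5)/(3n(n-1))\ge 1/(2n^2)$ for $n\ge 3$.

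With $d\ge 1/(2n^2)$ established, I would finish by splitting on the size of $\mu$. If $\mu\ge 1/2$ then $2\mu\ge 1$, hence $2\mu d\ge d$ and $D=\min\{d,1\}\ge 1/(2n^2)$, so $e^{-\mu}+D\ge 1/(2n^2)$. If instead $\mu<1/2$ then $e^{-\mu}>e^{-1/2}>1/2\ge 1/(2n^2)$, and since $D\ge 0$ we again obtain $e^{-\mu}+D\ge 1/(2n^2)$. Either way the left-hand inequality holds.

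The only genuine obstacle is the pair of binomial simplifications pinning down $P$ and $U$ and the verification that $Q\le 1$; these are elementary but require care, because $\binom{N}{2}$ with $N=\binom{n}{2}$ expands to a degree-four polynomial in $n$, and one must check the tightest small cases $n=3,4$ by hand rather than relying on the leading-order estimates. Everything else reduces to the single observation that the quantity in question is simply $e^{-\mu}+D$, together with the trivial bounds $e^{-\mu}\le 1$ and $D\le 1$.
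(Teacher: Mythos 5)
Your proof is correct, and it shares the paper's skeleton: both arguments collapse the expression to $e^{-\mu}+D$ via the same identity, get the upper bound from $D\le 1$ and $\mu\ge 0$, and obtain the computational core of the lower bound identically, namely $d_5\ge P=\frac{8}{3(n+1)}$ (using $(1-Q)\ge 0$ plus nonnegativity of the other terms, via the expansion $\binom{N}{2}=\frac{(n+1)n(n-1)(n-2)}{8}$) and $d_6\ge U=\frac{2(4n-5)}{3n(n-1)}$, both of which dominate $\frac{1}{2n^2}$ for $n\ge 3$. Where you genuinely differ is the final assembly. The paper also derives explicit lower bounds on $\mu$: it simplifies $\mu_5(n,j)=\frac{2}{3}\frac{(n-j)(n-j-1)}{n+1}\ge \frac1n$ and $\mu_6(n,j)\ge \frac1n$, so that $D_{5,6}=\min\{d,2\mu d,1\}\ge \min\left\{\frac{c}{n},\frac{2c}{n^2},1\right\}\ge\frac{1}{2n^2}$, and then discards the term $e^{-\mu}$ entirely. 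You instead avoid bounding $\mu$ from below at all, by a dichotomy: if $\mu\ge 1/2$ then $2\mu d\ge d$ and $D\ge\min\{d,1\}\ge\frac{1}{2n^2}$, while if $\mu<1/2$ then $e^{-\mu}>e^{-1/2}>\frac12\ge\frac{1}{2n^2}$ and $D\ge 0$ suffices. This is a real (if modest) economy: it removes the need to simplify and minimize $\mu_5$ and $\mu_6$, and it actually uses the sum structure $e^{-\mu}+D$ that the paper sets up but then exploits only through the single term $D$. What the paper's route buys in exchange is a lower bound on $D_{5,6}$ alone (not just on the sum), together with the explicit estimates $\mu_{5,6}\ge\frac1n$; your route proves exactly the stated lemma with less computation. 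The remaining verifications you flag ($Q\le 1$ — for which you should also note $Q\ge 0$, immediate since its numerator is $3((n-j)-2)^2+1$ — the values of $P$ and $U$, and the small cases $n=3,4$) are precisely the ones the paper carries out, where it proves the slightly stronger $Q\le\frac12$.
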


\begin{proof} 
By definition, $D_{5,6} (n,j)\le 1$,  and clearly $\mu_{5,6}(n,j)\ge 0$, then 
\[
 e^{-\mu_{5,6}(n,j)}\left(1+e^{\mu_{5,6}(n,j)}D_{5,6} (n,j)\right)=e^{-\mu_{5,6}(n,j)}+D_{5,6} (n,j)\le 2.
\]
To obtain a lower bound for {$e^{-\mu_{5,6}(n,j)}+D_{5,6} (n,j)$}, it suffices to bound the quantities $D_{5,6}(n,j)$.
\medskip

{\sl Lower bound for $D_{5}(n,j)$.} 
First we consider $\mu_5(n,j)$. The equality 
\begin{equation}\label{eq:binomN2}\binom{N}{2}=\frac{1}{2}\frac{n(n-1)}{2}\left(\frac{n(n-1)}{2}-1\right)=\frac{(n+1)n(n-1)(n-2)}{8}
\end{equation}
yields
\[
\mu_5(n,j)=\frac{2}{3}\frac{(n-j)(n-j-1)}{n+1}.
\]
This quantity,  $\mu_5(n,j)$, takes  its minimum when $j=n-2$. It follows that
\[\mu_5(n,j)\ge \frac{1}{n}.
\]
We claim that the quantity $d_5(n,j)=P+Q+(1-Q)((n-j)-2)(R+S+T)$ satisfies that 
\[d_5(n,j)\ge P \quad \text{for any} \quad 1\le j\le n-2.\]
  It is clear that $Q$ and $((n-j)-2)(R+S+T)$ are nonnegative. It only remains to prove that $1-Q$ is nonnegative. In fact, $Q\le 1/2$ for the values of $n$ and $j$ under consideration. 
 To prove that, first, we complete  squares and apply~\eqref{eq:binomN2}; then we take   $j=1$, and finally, we maximize over  over $n$ to obtain the last inequality, 
\[ 
Q= 8\frac{3((n-j)-2)^2+1}{(n+1)n(n-1)(n-2)}\le 8\frac{3(n-3)^2+1}{(n+1)n(n-1)(n-2)}\le \frac{1}{2}.
\]
\\
Finally, 
\[
d_{5}(n,j)\ge P=2\frac{\binom{n}{3}}{\binom{N}{2}}=\frac{8}{3(n+1)}\ge \frac{1}{n}.
\]
From the last lower bound and the bound $\mu_5(n,j)\ge 1/n$, we get the following: 
\[
D_5(n,j):= \min\left( d_{5}(n,j), 2\mu_5(n,j)d_5(n,j),1\right)\ge \min\left(\frac{1}{n} , \frac{2}{n^2},1\right)\ge\frac{1}{n^2}. 
\]  

{\sl Lower bound for $D_{6}(n,j)$.} First we consider $\mu_6(n,j)$. By definition,  $N=\binom{n}{2}$. 
\\It turns out  that
for every $n\ge 3$ and $j$ such that $1\le j \le n-2$,
\[
\mu_6(n,j)= \binom{(n-j)}{2} \frac{n(n-1)(4n-5)}{6 N^2}=\frac{1}{3}\frac{(n-j)(n-j-1)(4n-5)}{n(n-1)}\ge \frac{1}{n}.
\]
All the terms involved in the sum defining $d_6(n,j)$ are non-negative. 
Hence, 
\[
d_6(n,j){\ge U=} \frac{n(n-1)(4n-5)}{6N^2}=\frac{2}{3}\frac{4n-5}{n(n-1)}\ge \frac{2}{3n}.
\] 
Finally, the following holds and completes the proof of this lemma.
\[
D_6(n,j):= \min\left( d_{6}(n,j), 2\mu_6(n,j)d_6(n,j),1\right)\ge \min\left(\frac{2}{3n} , \frac{4}{3n^2},1\right)\ge\frac{1}{2n^2}. 
\]  
\end{proof}

 Let $\kappa$ be the map from $[0,1]$ to $\mathbb R$ given by 
\begin{equation}\label{kappa}
\kappa(x)=(e/2)^{1-x}(1-x)^{-(1-x)},\ \ \kappa(1)=1.
\end{equation}

\begin{lemma}\label{Le:ADS2} For any $n\ge 3$ and $j$ with 
$1\le j \le n-2$, the following holds
\begin{align}
 \label{Eq:LeA1}\frac{{e^{-2}}}{{2}\sqrt{n(n-1)}} \left(n^{1-j/n}\kappa(j/n)\right)^n&\le \binom{N}{n-j} \le    \left(n^{1-j/n}\kappa(j/n)\right)^n, 
 \\
  \label{Eq:LeA2} \frac{1}{4\sqrt{2\pi}\sqrt{n}} \left(n^{1-j/n}\kappa(j/n)\right)^n&\le \frac{N^{n-j}}{(n-j)!} \le \frac{ {1}}{\sqrt{2\pi}} \left(n^{1-j/n}\kappa(j/n)\right)^n. 
\end{align}
\end{lemma}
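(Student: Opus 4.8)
The plan is to write $m=n-j$ and $x=j/n$, so that $m=n(1-x)$, $2\le m\le n-1$, and $N=\binom{n}{2}=n(n-1)/2$, and to reduce everything to the single identity
\[
\left(\frac{Ne}{m}\right)^{m}=\left(\frac{n-1}{n}\right)^{m}\bigl(n^{1-x}\kappa(x)\bigr)^{n},
\]
which is checked directly from the definition~\eqref{kappa} of $\kappa$ together with $\bigl(n^{1-x}\kappa(x)\bigr)^{n}=\bigl(ne/(2(1-x))\bigr)^{m}$. Thus every bound is obtained by reducing the quantity at hand to a power of $Ne/m$ and then peeling off the slowly varying factor $((n-1)/n)^{m}$, which lies in $[(1-1/n)^{n},1]\subseteq[8/27,1]$ for $n\ge3$.

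For~\eqref{Eq:LeA2} I would apply the two-sided Stirling estimate~\eqref{StirlingRobbins} to $m!$ only, obtaining
\[
\frac{N^{m}}{m!}=\frac{e^{-r_{m}}}{\sqrt{2\pi m}}\left(\frac{Ne}{m}\right)^{m}=\frac{e^{-r_{m}}}{\sqrt{2\pi m}}\left(\frac{n-1}{n}\right)^{m}\bigl(n^{1-x}\kappa(x)\bigr)^{n}.
\]
The three correction factors are then bounded by explicit constants: $e^{-r_{m}}\in[e^{-1/12},1]$, the factor $((n-1)/n)^{m}\in[8/27,1]$, and $1/\sqrt{2\pi m}\in[1/\sqrt{2\pi n},1/\sqrt{2\pi}]$ because $1\le m\le n$. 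Multiplying these through gives the upper constant $1/\sqrt{2\pi}$ at once, and for the lower bound the product $8e^{-1/12}/27$ exceeds $1/4$, which yields the factor $1/(4\sqrt{2\pi}\sqrt{n})$.

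For the upper bound in~\eqref{Eq:LeA1} I would use the falling-factorial form $\binom{N}{m}=\frac{N^{m}}{m!}\prod_{i=0}^{m-1}(1-i/N)\le\frac{N^{m}}{m!}$ and combine it with the upper bound just proved, noting $1/\sqrt{2\pi}<1$. For the lower bound I would instead invoke the binomial estimate established after~\eqref{combinatorial}, applied with $(n,j)$ replaced by $(N,m)$, which is legitimate since $N\ge2$ and $1\le m\le N-1$; this produces $\binom{N}{m}\ge c_{0}\,\sqrt{N/(m(N-m))}\,\varphi(m/N)^{N}$ with $c_{0}=(\sqrt{2\pi}e^{1/6})^{-1}$. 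Here $\sqrt{N/(m(N-m))}\ge 2\sqrt{2}/\sqrt{n(n-1)}$ because $m(N-m)\le N^{2}/4$, which manufactures exactly the advertised polynomial factor $1/\sqrt{n(n-1)}$, while $\varphi(m/N)^{N}$ is converted to $\bigl(n^{1-x}\kappa(x)\bigr)^{n}$ through the master identity at the cost of the further correction $e^{-m}(1-m/N)^{-(N-m)}$, which I would bound below using $m^{2}/N\le2$.

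The routine but delicate part is the uniform control of these correction factors: the expansion $\ln(1-t)=-\sum_{k\ge1}t^{k}/k$ must be summed carefully so that the resulting constants beat $e^{-2}/2$ for every $n\ge3$ simultaneously. I expect the binding regime to be small $n$, where the correction $e^{-m}(1-m/N)^{-(N-m)}$ sits furthest from $1$ relative to the target constant; there one likely verifies the finitely many values $n=3,\dots,n_{0}$ directly and uses the clean asymptotic bound for $n\ge n_{0}$. This constant-chasing, rather than any conceptual difficulty, is the main obstacle.
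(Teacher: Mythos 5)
Your reduction to the identity $(Ne/m)^m=\bigl(\tfrac{n-1}{n}\bigr)^m\bigl(n^{1-x}\kappa(x)\bigr)^n$, your proof of both inequalities in \eqref{Eq:LeA2}, and your upper bound in \eqref{Eq:LeA1} via $\binom{N}{m}\le N^m/m!$ are all correct, and in fact cleaner than the paper's route (the paper extracts both sides of \eqref{Eq:LeA1} from a decomposition of $\varphi((n-j)/N)^N$). The genuine gap is the lower bound in \eqref{Eq:LeA1}: your constants do not reach $e^{-2}/2$, and the failure is asymptotic, not at small $n$ as you predict. Your chain certifies the prefactor $c_0\cdot 2\sqrt{2}\cdot\bigl(\tfrac{n-1}{n}\bigr)^m e^{-m^2/N}$ in front of $\bigl(n^{1-x}\kappa(x)\bigr)^n/\sqrt{n(n-1)}$; with your stated bounds $\bigl(\tfrac{n-1}{n}\bigr)^m\ge 8/27$ and $e^{-m^2/N}\ge e^{-2}$ this equals $\tfrac{2\sqrt 2}{\sqrt{2\pi}e^{1/6}}\cdot\tfrac{8}{27}\cdot e^{-2}\approx 0.038$, below the required $e^{-2}/2\approx 0.068$, for every $n$. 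Sharpening both corrections to their true worst-case values does not help: at $j=1$ (so $m=n-1$) they are $(1-1/n)^{n-1}\to e^{-1}$ and $e^{-2+2/n}\to e^{-2}$, so the best constant your chain can give tends to $\tfrac{2\sqrt2}{\sqrt{2\pi}e^{1/6}}\,e^{-3}\approx 0.048$; concretely, the ratio of your guarantee to the target at $j=1$ is $4\sqrt2\,c_0(1-1/n)^{n-1}e^{2/n}$, which is below $1$ for every $n\ge 8$ and decreasing. Since the defect persists for all large $n$, no finite verification of small cases can repair the plan as written.

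Both lossy steps can be identified, and fixing either suffices. First, the AM-GM step $m(N-m)\le N^2/4$ wastes a factor of order $\sqrt n$, because here $m\le n-1$ while $N=n(n-1)/2$, so $m/N\le 2/n$; using $m(N-m)\le mN$ instead gives $\sqrt{N/(m(N-m))}\ge 1/\sqrt{m}\ge 1/\sqrt{n-1}$, after which the comparison with the target reduces to $2c_0e^{-1}\sqrt{n}\ge1$, i.e.\ $n\ge 17$, leaving a finite check. Second, and better, the first-order bound $-\ln(1-t)\ge t$ is what costs you a full factor of $e^{-1}$: keeping the quadratic term, $-\ln(1-t)\ge t+t^2/2$, upgrades your correction from $e^{-m^2/N}$ to $e^{-m^2/(2N)-m^3/(2N^2)}\ge e^{-1-1/n}$ (since $m^2/(2N)\le 1-1/n$ and $m^3/(2N^2)\le 2/n$), and then your prefactor is at least $\tfrac{2\sqrt2}{\sqrt{2\pi}e^{1/6}}\,e^{-1}\,e^{-1-1/n}\ge 0.95\,e^{-1/3}e^{-2}>e^{-2}/2$ for every $n\ge 3$, with no case analysis at all. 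You are in good company: the paper's own proof stumbles at exactly this spot. Its display \eqref{Eq:LemAf} raises the two-sided bound \eqref{Eq:eala} to the negative power $-(n-j)+(n-j)^2/N$ without exchanging the two sides (indeed its claimed upper bound $b(n,j)\le e^{(n-j)-(n-j)^2/N}$ already fails at $n=3$, $j=1$, where $b=3$), and the corrected version of that argument certifies only $e^{-3}$; the inequality $e^{-1}e^{n-j}\le b(n,j)$ that the paper's constant rests on is true, but proving it requires precisely the higher-order refinement above.
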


\begin{proof} 
{\sl We start by proving  Inequality~\eqref{Eq:LeA1}.}
With the bounds given for the binomial coefficients in~\eqref{binom}, the following holds
\begin{equation}\label{Eq:ini}
\frac{1}{2\sqrt N}\varphi((n-j)/N)^N\le \binom{N}{n-j}\le \varphi((n-j)/N)^N
\end{equation}
with $\varphi(x)=x^{-x}(1-x)^{-(1-x)}$, for any $n\ge 3$ and $1\le j\le n-2$. 
The expression  $\varphi((n-j)/N)^N$ has two factors, the first one  corresponds to $x^{-x}$ and the second one corresponds to $(1-x)^{1-x}$.
We replace $N$ by $n(n-1)/2$ only in the first factor. The  exponent of the second factor  is multiplied and divided by ${N}/({n-j})$. This leads to the following equality
\[
\varphi\left(\frac{(n-j)}{N}\right)^N
=
n^{n-j}\left(1-\frac{1}{n}\right)^{n-j}\left(2\left(1-\frac{j}{n}\right)\right)^{-(n-j)} b(n,j)
\]
with
\begin{equation}\label{Eq:bnj}
b(n,j)=\left(\left(1-\frac{n-j}{N}\right)^{\frac{N}{n-j}}\right)^{-(n-j)+\frac{(n-j)^2}{N}} .
\end{equation}
Let $c(n,j)$ be defined as
\[ c(n,j)=\left(1-\frac{1}{n}\right)^{n-j}b(n,j).\]
The right hand side of the equality before \eqref{Eq:bnj} is the product of four  factors. We leave the first and the third as they are. We deal with the second and the fourth. 
The factor $(1-1/n)^{n-j}$ satisfies 
\begin{equation}\label{Eq:1menos1n}e^{-1}
\le \left(1-\frac{1}{n}\right)^{n-1}\le \left(1-\frac{1}{n}\right)^{n-j}\le 1.
\end{equation}
The right hand side inequality is due to the fact that $1-1/n\le 1$. The left hand side inequality is due to the fact that $(1-{1}/{n})^{n-1}$  decreases towards its limit as $n\to \infty$. 

We  study  $b(n,j)$, defined in~\eqref{Eq:bnj}. 
First, we use the classical inequality 
\[-x-x^2\le \ln(1-x)\le -x \quad (0< x\le 2/3).
\]
After multiplying by $1/x$ and taking 
powers, 
we get
\begin{equation}\label{Eq:eala}
e^{-1-x}\le (1-x)^{1/x}\le e^{-1} \quad (0< x\le 2/3).
\end{equation}
Observe that, for $j\ge 1$,
\[
\frac{n-j}{N}=\frac{2(n-j)}{n(n-1)}\le\frac{2}{n}.
\]
Notice that  $0<({n-j})/{N}\le 2/3$ since $n\ge 3$. This allows us to replace $x$ by $(n-j)/N$ in~\eqref{Eq:eala}. It turns out that
\[
e^{-1-(\frac{n-j}{N})}\le \left(1-\frac{n-j}{N}\right)^{\frac{N}{n-j}} \le e^{-1}.
\]
To obtain $b(n,j)$, consider the previous expressions to the power $-(n-j)+ {(n-j)^2}/{N} $. With our bound on $(n-j)/N$, the exponent of the left hand side satisfies
\[
\left(-1-\frac{n-j}{N}\right)\left(-(n-j)+\frac{(n-j)^2}{N}\right)=(n-j)-\frac{(n-j)^3}{N^2}  
\ge (n-j)-1.
\]
Finally,
\begin{equation}\label{Eq:LemAf}e^{-1}e^{(n-j)}\le b(n,j)\le e^{(n-j)-\frac{(n-j)^2}{N}}\le e^{(n-j)}. 
\end{equation}
From Inequalities~\eqref{Eq:1menos1n} and~\eqref{Eq:LemAf}, it follows that  $c(n,j)e^{-(n-j)}$ takes values in $[e^{-2},1]$ and the following holds   
\begin{align*}
\varphi\left(\frac{(n-j)}{N}\right)^N&=c(n,j)n^{n-j}\left(2\left(1-\frac{j}{n}\right)\right)^{-(n-j)}\\
&=c(n,j)e^{-(n-j)}\left(n^{1-j/n}\kappa(j/n)\right)^n.
\end{align*}
To end the proof of  Inequality~\eqref{Eq:LeA1}
consider Inequality~\eqref{Eq:ini} together with the fact that $N=n(n-1)/2$. 

{\sl Proof of Inequality \eqref{Eq:LeA2}.} Approximating  the factorial by~\eqref{StirlingRobbins};  extracting $n$ as a common factor  in $(n-j)^{n-j}$, in $(n-1)^{n-j}$, and in $(n-1)^{n-j}$; and writing the final expression as an $n$-th power (similarly to what  it is done in~\eqref{tb}), we get 
\begin{align*}
\frac{N^{n-j}}{(n-j)!}&\le \frac{n^{n-j}(n-1)^{n-j}}{2^{n-j}}\frac{e^{n-j}}{\sqrt{2\pi(n-j)}(n-j)^{n-j}}\\
&=\frac{(1-1/n)^{n-j}}{\sqrt{2\pi(n-j)}}\left(n^{1-j/n}\kappa(j/n)\right)^n.
\end{align*}
We obtain the lower bound similarly,
\[\frac{N^{n-j}}{(n-j)!}\ge e^{-1/12}\frac{(1-1/n)^{n-j}}{\sqrt{2\pi(n-j)}}\left(n^{1-j/n}\kappa(j/n)\right)^n.\]
Finally, with  Inequality~\eqref{Eq:1menos1n}, and since  $1\le n-j\le n$, we obtain the bounds 
\[
\frac{1}{4\sqrt{2\pi n}}\le \frac{e^{-1-1/12}}{ \sqrt{2\pi n}}\le \frac{(1-1/n)^{n-j}}{\sqrt{2\pi(n-j)}}\le \frac{1}{\sqrt{2\pi }}
\]
that  prove the estimates  on $ {N^{n-j}}/(n-j)!$. 
\end{proof}

The next Proposition~\ref{Pro:ADS} is a direct consequence of  Lemmas~\ref{Le:ADS1} and~\ref{Le:ADS2}.
Recall that $\kappa:[0,1]\to \mathbb R$  defined in~\eqref{kappa}, 
$\kappa(x)=(e/2)^{1-x}(1-x)^{-(1-x)},\ \ \kappa(1)=1$.

\begin{proposition}\label{Pro:ADS} 
For any $n\ge 3$ and $1\le j\le n-2$,
\begin{equation}\label{Arratia-grafico}
 \frac{e^{-2}}{4n^3} \left(n^{1-j/n}\kappa(j/n)\right)^n \le A_{5,6}(n,j)\le 2\left(n^{1-j/n}\kappa(j/n)\right)^n.
\end{equation}
\end{proposition}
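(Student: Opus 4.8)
The plan is to derive Proposition~\ref{Pro:ADS} directly by combining the two lemmas, since the structure of $A_5$ and $A_6$ factors cleanly into a ``binomial-type'' piece and a ``correction'' piece. Recall that
\[
A_5(n,j)=\binom{N}{n-j}\,e^{-2\mu_5(n,j)}\left(1+e^{2\mu_5(n,j)}D_5(n,j)\right),
\]
and symmetrically $A_6(n,j)=\frac{N^{n-j}}{(n-j)!}\,e^{-\mu_6(n,j)}\left(1+e^{\mu_6(n,j)}D_6(n,j)\right)$. The key observation is that each $A_{5,6}$ is precisely a product of a leading combinatorial factor (controlled by Lemma~\ref{Le:ADS2}) and a correction factor of the form $e^{-\mu}\left(1+e^{\mu}D\right)$ (controlled by Lemma~\ref{Le:ADS1}). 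One small bookkeeping point: Lemma~\ref{Le:ADS1} as stated uses $\mu_{5,6}$ and $D_{5,6}$ with a single factor of $e^{\mu}$ inside the parenthesis, whereas the definition of $A_5$ has $e^{2\mu_5}$; I would first note that the bound $D_{5,6}\le 1$ together with $\mu_{5,6}\ge 0$ makes the argument of Lemma~\ref{Le:ADS1} apply equally to the $e^{-2\mu_5}(1+e^{2\mu_5}D_5)=e^{-2\mu_5}+D_5$ form, so the two-sided bound $[1/(2n^2),\,2]$ holds for both correction factors.

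The main steps, in order, are as follows. First I would invoke Lemma~\ref{Le:ADS1} to bound the correction factor in $[1/(2n^2),2]$. Then I would invoke the appropriate half of Lemma~\ref{Le:ADS2}: Inequality~\eqref{Eq:LeA1} bounds $\binom{N}{n-j}$ between $\frac{e^{-2}}{2\sqrt{n(n-1)}}\left(n^{1-j/n}\kappa(j/n)\right)^n$ and $\left(n^{1-j/n}\kappa(j/n)\right)^n$ for the $A_5$ case, while Inequality~\eqref{Eq:LeA2} bounds $\frac{N^{n-j}}{(n-j)!}$ between $\frac{1}{4\sqrt{2\pi n}}\left(n^{1-j/n}\kappa(j/n)\right)^n$ and $\frac{1}{\sqrt{2\pi}}\left(n^{1-j/n}\kappa(j/n)\right)^n$ for the $A_6$ case. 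Multiplying the leading-factor bounds by the correction-factor bounds then yields two-sided estimates on $A_{5,6}(n,j)$, each of the form $(\text{const}\cdot n^{-p})\left(n^{1-j/n}\kappa(j/n)\right)^n$ below and $(\text{const})\left(n^{1-j/n}\kappa(j/n)\right)^n$ above.

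Finally I would coarsen both the lower and upper constants so that a single pair of bounds covers $A_5$ and $A_6$ simultaneously, which is the point of the $A_{5,6}$ notation. For the upper bound, multiplying the larger of the two leading upper constants (namely $1$, from $\binom{N}{n-j}$) by the correction upper bound $2$ gives the claimed factor $2$. For the lower bound, the $A_5$ side produces $\frac{e^{-2}}{2\sqrt{n(n-1)}}\cdot\frac{1}{2n^2}$ and the $A_6$ side produces $\frac{1}{4\sqrt{2\pi n}}\cdot\frac{1}{2n^2}$; I would check that both are at least $\frac{e^{-2}}{4n^3}$ for $n\ge 3$, using $\sqrt{n(n-1)}\le n$ and $\sqrt{n}\le n$ to absorb the square roots into the $n^3$. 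I expect this last consolidation of constants to be the only genuinely fiddly part, and it is purely arithmetic: the substantive work is entirely contained in the two preceding lemmas, so the proof of the proposition should be short.
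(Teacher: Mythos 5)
Your proposal is correct and follows essentially the same route as the paper: apply Lemma~\ref{Le:ADS1} to bound the correction factor in $[1/(2n^2),2]$, apply Lemma~\ref{Le:ADS2} to the leading combinatorial factor, multiply, and consolidate the constants of the $A_5$ and $A_6$ cases into the single pair $e^{-2}/(4n^3)$ and $2$. Your explicit handling of the $e^{-2\mu_5}$ versus $e^{-\mu_5}$ exponent mismatch between the definition of $A_5$ and the statement of Lemma~\ref{Le:ADS1} is a small point of care that the paper passes over silently, and your arithmetic for absorbing $\sqrt{n(n-1)}$ and $\sqrt{2\pi n}$ into $n^3$ checks out.
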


\begin{proof}
Lemma~\ref{Le:ADS1} proves that, for any  $n\ge 3$ and $1\le j\le n-2$,
\[
 \frac{1}{2n^2}\le e^{-\mu_{5,6}(n,j)}\left(1+e^{\mu_{5,6}(n,j)}D_{5,6} (n,j)\right) \le 2.
\]
Then,
\begin{eqnarray*}
\frac{1}{{2}n^2} \binom{N}{n-j}  \le& A_5(n,j)&\le 2 \binom{N}{n-j},
\\
\frac{1}{2n^2} \frac{N^{n-j}}{(n-j)!}  \le& A_6(n,j)&\le 2 \frac{N^{n-j}}{(n-j)!} .
\end{eqnarray*}
Lemma~\ref{Le:ADS2} provides us bounds on the terms involving combinatorials and factorials and gives 
\begin{align*}
 \frac{e^{-2}}{2\sqrt{n(n-1)}} \left(n^{1-j/n}\kappa(j/n)\right)^n&\le \binom{N}{n-j} \le   \left(n^{1-j/n}\kappa(j/n)\right)^n, 
\\
  \frac{1}{4\sqrt{2\pi}\sqrt{n}} \left(n^{1-j/n}\kappa(j/n)\right)^n &\le \frac{N^{n-j}}{(n-j)!} \le \frac{1}{\sqrt{2\pi}} \left(n^{1-j/n}\kappa(j/n)\right)^n. \end{align*}
Finally,
 \begin{align*}
\frac{e^{-2}}{4n^3}  \left(n^{1-j/n}\kappa(j/n)\right)^n &\le A_5(n,j)\le 2  \left(n^{1-j/n}\kappa(j/n)\right)^n, 
\\
 \frac{1}{8\sqrt{2\pi}n^2\sqrt n}\left(n^{1-j/n}\kappa(j/n)\right)^n  & \le A_6(n,j)\le  \frac{2}{\sqrt{2\pi}}\left(n^{1-j/n}\kappa(j/n)\right)^n.
\end{align*}
Combining both inequalities, Proposition~\ref{Pro:ADS} follows. 
\end{proof}

\subsection{Bender's estimate}

The notation $r_n\sim s_n$ indicates that $\lim_{n\to \infty}r_n/s_n=1$ when $n\to \infty$.
Bender~\cite{Bender}  establishes that for any real number~$r$ such that 
$0<r<1/2$,  then 
\[
  S_{n}^{(j)}    \sim\frac{n!e^{-\alpha j}}{j!\rho^{n+1}(1+e^\alpha)\sigma \sqrt{2\pi n}} 
\]
uniformly for $j/n\in [r,1-r]$,
where  $\alpha$ is such that 
\begin{align*}
\frac{n}{j}&=(1+e^{\alpha})\ln(1+e^{-\alpha})
\end{align*}
and
\begin{align*}
\rho&=\ln(1+e^{-\alpha}), 
\\
 \sigma^2& =\left(\frac{j}{n}\right)^2\big(1-e^\alpha\ln(1+e^{-\alpha})\big).
\end{align*}
We  introduce two   functions  to describe  the behavior of 
$S_{n}^{(j)}$  in terms of~$j/n$ (see Fig. \ref{Fig:psimu}),

\begin{align}\label{psimu}
\psi:(0,1)\mapsto \mathbb R, && 
\psi(x)=&\frac{e^{-((1-x)+x\delta(x) )}}{x^x \ln(1+e^{-\delta(x)}) } 
\\ 
\mu:(0,1)\mapsto \mathbb R, && 
\mu(x)=&\left(x (1-e^{\delta(x)}\ln(1+e^{-\delta(x)}))\right)^{1/2}  \nonumber
\end{align}
where  $\delta:(0,1) \mapsto \mathbb R$  is defined by
\begin{equation}\label{alfa}
\delta^{-1} (y) = \frac{1}{(1+e^{y})\ln(1+e^{-y})}.
\end{equation} 
The next lemma rephrases Bender's estimate using $\psi(j/n)$ and $\mu(j/n)$.

\begin{lemma}\label{Bender} 
For any positive  real number $r$ such that  $0<r<1/2$ and for  any real number  $C>1$
 there exists an integer $n_0=n_0(r,C)\ge 2$ such that
for every integer $n\ge n_0$ and for every integer $j$ with $1\le j\le n-1$ and $j/n\in [r,1-r]$. 
\[
 \frac{{e^{-1/12}}}{C \sqrt{2\pi n}\mu(j/n)}\left(n^{1-j/n}\psi(j/n)\right)^n \le 
 S_n^{(j)}
\le 
\frac{e^{1/12}C }{\sqrt{2\pi n}\mu(j/n)} \left(n^{1-j/n}\psi(j/n)\right)^n.
\]
\end{lemma}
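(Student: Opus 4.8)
The plan is to turn Bender's asymptotic equivalence into the claimed two-sided inequality by (i) identifying the auxiliary parameter $\alpha$ with $\delta(j/n)$, (ii) simplifying the closed form using the definitions of $\psi$, $\mu$ and $\delta$, and (iii) controlling the two separate sources of multiplicative error, namely Stirling's formula (which produces the $e^{\pm 1/12}$ factors) and the uniform asymptotic (which produces the $C^{\pm 1}$ factors).

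First I would make the key identification. Bender defines $\alpha$ by $n/j=(1+e^{\alpha})\ln(1+e^{-\alpha})$, that is $j/n=1/\big((1+e^{\alpha})\ln(1+e^{-\alpha})\big)$, which by the definition of $\delta$ in~\eqref{alfa} is exactly $j/n=\delta^{-1}(\alpha)$. Hence $\alpha=\delta(j/n)$, and every occurrence of $\alpha$ in the asymptotic formula may be replaced by $\delta(j/n)$. In particular $\rho=\ln(1+e^{-\delta(j/n)})$, and writing $x=j/n$ we get $\sigma=x\big(1-e^{\delta(x)}\ln(1+e^{-\delta(x)})\big)^{1/2}=\sqrt{x}\,\mu(x)$ by the definition of $\mu$ in~\eqref{psimu}.

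Next I would split Bender's right-hand side, call it $B_n(j):=n!\,e^{-\alpha j}/\big(j!\,\rho^{n+1}(1+e^{\alpha})\sigma\sqrt{2\pi n}\big)$, into an $n$th-power part and a polynomial prefactor. Collecting the terms raised to the $n$th power, namely $n^n/j^j$, $e^{-n+j}$, $e^{-\alpha j}$ and $\rho^{-n}$, and using $j=xn$, these combine into $(n^{1-x}\psi(x))^n$, because the per-symbol constant
\[
\frac{x^{-x}e^{-(1-x)}e^{-x\delta(x)}}{\ln(1+e^{-\delta(x)})}
\]
is precisely $\psi(x)$ from~\eqref{psimu}. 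For the prefactor I would invoke the reciprocal identity $x=1/\big((1+e^{\delta(x)})\ln(1+e^{-\delta(x)})\big)$, which is again just~\eqref{alfa} read at $y=\delta(x)$, to cancel the factors $1/\rho$ and $1/(1+e^{\alpha})$ against one power of $x$, and I would combine the $\sqrt{n/j}=x^{-1/2}$ coming from Stirling with $\sigma=\sqrt{x}\,\mu(x)$. After these cancellations the prefactor collapses to $1/\big(\sqrt{2\pi n}\,\mu(x)\big)$, giving exactly the central quantity in the lemma.

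Finally I would handle the two errors. The factorials $n!$ and $j!$ are controlled by the two-sided Stirling bounds~\eqref{StirlingRobbins}; taking the larger constant $e^{1/12}$ for the numerator factorial and the smaller constant $1$ for the denominator factorial (and vice versa for the reverse inequality) shows that $B_n(j)$ equals the central quantity up to a factor in $[e^{-1/12},e^{1/12}]$. Moreover Bender's equivalence $S_n^{(j)}\sim B_n(j)$ holds \emph{uniformly} for $x\in[r,1-r]$, so given $C>1$ I would set $\varepsilon=\min\{C-1,1-1/C\}>0$ and take $n_0$ from the uniform convergence so that $1/C\le S_n^{(j)}/B_n(j)\le C$ holds for all $n\ge n_0$ and all admissible $j$ at once; multiplying the two error factors yields the stated bounds. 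I expect the main obstacle to be not any single hard estimate but the bookkeeping that makes the $n$th-power coefficient equal $\psi$ and the prefactor collapse to $1/(\sqrt{2\pi n}\,\mu)$; the one genuinely delicate point is that a single $n_0$ works simultaneously for every $j$ with $j/n\in[r,1-r]$, which is exactly why Bender's uniform rather than pointwise asymptotic is needed.
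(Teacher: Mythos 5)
Your proposal is correct and follows essentially the same route as the paper's proof: identify $\alpha=\delta(j/n)$ via the defining relation of $\delta$, collect the $n$-th power terms into $\left(n^{1-j/n}\psi(j/n)\right)^n$, collapse the prefactor to $1/(\sqrt{2\pi n}\,\mu(j/n))$ using the identity $(1+e^{\alpha})\rho\,(j/n)=1$ together with $\sigma=\sqrt{j/n}\,\mu(j/n)$, and then track the two error sources separately (Stirling's $e^{\pm 1/12}$ for $n!/j!$, and the uniformity of Bender's asymptotic on $[r,1-r]$ for the factor $C^{\pm1}$ with a single $n_0$). The paper organizes the prefactor cancellation slightly differently, via the observation $(1+e^{\alpha})\rho\sigma=\big(1-e^{\alpha}\ln(1+e^{-\alpha})\big)^{1/2}$, but this is the same algebra in a different order.
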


\begin{proof}
Observe that 
\[
(1+e^\alpha)\rho \sigma=\big(1-e^\alpha\ln(1+e^{-\alpha})\big)^{1/2}.
\] 
Thus, Bender's estimate implies that for any $r$ with $0<r<1/2$ and for any  $C>1$ there exists $n_0=n_0(r,C)$ 
such that for any pair of positive  integers $n,j$, with  $n\ge n_0$ and  $j/n\in [r,1-r]$,
\begin{equation}\label{BenderInequalities}
\frac{1}{C }  T_\alpha(n,j) \le S_{n}^{(j)}\le  C T_\alpha(n,j)
\end{equation}
where
\begin{equation*}\label{Talfanj}
 T_\alpha(n,j)= \frac{n!}{j!}\frac{e^{-\alpha j}}{\rho^{n} (1-e^\alpha\ln(1+e^{-\alpha}))^{1/2} \sqrt{2\pi n}}.
\end{equation*}
Using~\eqref{StirlingRobbins} we  have
\[
{e^{-1/12}}e^{j-n}\frac{\sqrt{n}}{\sqrt{j}}  \frac{n^n}{j^{j}}\le \frac{n!}{j!}\le e^{1/12} e^{j-n}\frac{\sqrt{n}}{\sqrt{j}}  \frac{n^n}{j^{j}}.
\]
We remark that
\[e^{j-n}\frac{n^n}{j^j}=\left(e^{-(1-j/n)}\left(j/n\right)^{-j/n}\right)^n.
\]
Then, using the expressions for $\psi(n/j)$ and $\mu(j/n)$,
\[
\frac{e^{-1/12}}{\sqrt{2\pi n}\mu(j/n)}\left(n^{\frac{(n-j)}{n}}\psi(j/n)\right)^n \le T_\alpha(n,j)
\le \frac{e^{1/12}}{\sqrt{2\pi n}\mu(j/n)} \left(n^{\frac{(n-j)}{n}}\psi(j/n)\right)^n.
\]
Combining these inequalities with~\eqref{BenderInequalities} we obtain the wanted result.
 \end{proof}

\begin{figure}
\begin{center}
\includegraphics{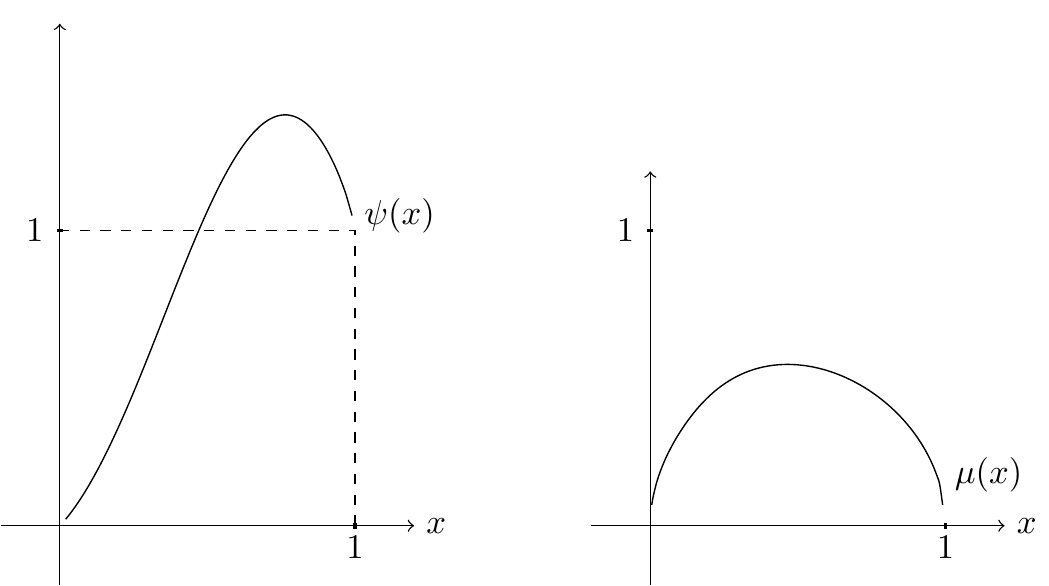}
\end{center}

\caption{Graphs of $\psi(x)$ and $\mu(x)$.}
\label{Fig:psimu}

\end{figure}

The functions $\psi(x)$ and $\mu(x)$ are  smooth and concave in the open interval~$(0,1)$.  
The function $\delta^{-1}(y)$ is increasing and  
\[
\lim_{x\to 0^+}\delta(x)=-\infty \hbox{ and }\lim_{x\to 1^-}\delta(x)=+\infty.
\]
From this, it is clear that  
$\lim_{x\to 0^+}\psi(x)=0$, $\lim_{x\to 1^-} \psi(x)=1$, and  $ \lim_{x\to 0^+} \mu(x)=$ $\lim_{x\to 1^-} \mu(x)=0$. 
Then,   the bounds given in Lemma~\ref{Bender} become indeterminate when $j/n$ 
is near $0$ or~$1$. This is why     $j/n $ must be in a central interval in~$(0,1)$.

The next corollary is a straightforward consequence of Lemma~\ref{Bender} 
and the fact that $\mu(x)$ is uniformly bounded on any closed interval included in $(0,1)$. 
The constants $c_1$ and $C_1$ in the statement of Corollary~\ref{corolarioBender}
 can be chosen as the minimum and maximum values of $\{\mu(x):x\in[r,1-r]\}$.

\begin{corollary}\label{corolarioBender}
For any positive  real number $r$ such that  $0<r<1/2$, 
there exist $c_1$ and $C_1$ such that 
for every pair of positive integers $n,j$ with  $j/n\in [r,1-r]$
we have
\begin{equation}\label{Bender-grafico}
 {e^{-1/12}}\frac{c_1}{\sqrt{2\pi n}}\left(n^{1-j/n}\psi(j/n)\right)^n \le  S_n^{(j)}\le e^{1/12}\frac{C_1}{\sqrt{2\pi n}} \left(n^{1-j/n}\psi(j/n)\right)^n.
\end{equation}
 \end{corollary}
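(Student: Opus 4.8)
The plan is to deduce Corollary~\ref{corolarioBender} directly from Lemma~\ref{Bender} by replacing the factor $\mu(j/n)$ appearing in the denominators of the bounds there with its extremal values over the compact interval $[r,1-r]$. First I would fix the given $r\in(0,1/2)$, select any $C>1$, and invoke Lemma~\ref{Bender} to obtain, for every $n\ge n_0(r,C)$ and every $j$ with $j/n\in[r,1-r]$, the two-sided estimate
\[
\frac{e^{-1/12}}{C\sqrt{2\pi n}\,\mu(j/n)}\left(n^{1-j/n}\psi(j/n)\right)^n \le S_n^{(j)} \le \frac{e^{1/12}C}{\sqrt{2\pi n}\,\mu(j/n)}\left(n^{1-j/n}\psi(j/n)\right)^n.
\]

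The central step is to control the factor $1/\mu(j/n)$ uniformly. The function $\mu$ is continuous on $(0,1)$ (indeed smooth and concave, as recorded after Lemma~\ref{Bender}) and strictly positive there; positivity is clear from the definition, since the quantity under the square root equals $\sigma^2/x$ with $\sigma^2$ the strictly positive variance from Bender's estimate. Restricted to the compact set $[r,1-r]$, the extreme value theorem then guarantees that $\mu$ attains a positive minimum $m_r=\min\{\mu(x):x\in[r,1-r]\}$ and a maximum $M_r=\max\{\mu(x):x\in[r,1-r]\}$. Hence for every $j/n\in[r,1-r]$ one has $1/M_r\le 1/\mu(j/n)\le 1/m_r$, and substituting these two inequalities into the display above produces exactly the bounds of Corollary~\ref{corolarioBender} with $c_1=1/(CM_r)$ and $C_1=C/m_r$; in particular the constants are determined solely by the extrema of $\mu$ over $[r,1-r]$ together with the fixed $C$, which is the content of the remark preceding the statement.

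Finally I would account for the pairs that the asymptotic Lemma~\ref{Bender} does not cover, namely those with $n<n_0(r,C)$. There are only finitely many integers $n<n_0$, and for each such $n$ only finitely many $j$ satisfy $j/n\in[r,1-r]$ (so in particular $1\le j\le n-1$); for every such pair both $S_n^{(j)}$ and the reference quantity $(n^{1-j/n}\psi(j/n))^n/\sqrt{2\pi n}$ are strictly positive, the latter because $\psi$ is continuous and strictly positive on $[r,1-r]$. Consequently I can decrease $c_1$ and increase $C_1$ by a single factor depending only on $r$, so that both inequalities of the corollary also hold across these finitely many exceptional pairs, without changing the dependence of the constants on $r$.

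I expect no genuine obstacle here, as the statement is a routine consequence of Lemma~\ref{Bender}. The only two points that deserve a word of care are verifying that $\mu$ is bounded away from $0$ on $[r,1-r]$ — precisely the uniform boundedness of $\mu$ invoked before the corollary — and absorbing the finitely many small-$n$ pairs left out by the asymptotic estimate; both are settled by the compactness and positivity observations above.
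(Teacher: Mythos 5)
Your proposal is correct and follows essentially the same route as the paper, which obtains Corollary~\ref{corolarioBender} from Lemma~\ref{Bender} together with the uniform boundedness (and positivity) of $\mu$ on the compact interval $[r,1-r]$, choosing $c_1$ and $C_1$ from the extrema of $\mu$ there. Your explicit absorption of the finitely many pairs with $n<n_0(r,C)$ into the constants is a detail the paper leaves implicit, and it is exactly what is needed for the corollary to hold for \emph{every} pair of positive integers $n,j$ as stated.
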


\subsection{A plot}

The four upper bounds given in~\eqref{trivial},~\eqref{Rennie-grafico},~\eqref{Arratia-grafico} and~\eqref{Bender-grafico}
are of the form  
\[
S_n^{(j)}\leq  n^{n-j} \text{\tt bound}
\]
In order to visualize them we  divide both sides  by $n^{n-j} $ and we take   $n$-th root in both sides.
\[
\left(S_n^{(j)}/ n^{n-j}\right)^{1/n} \leq \text{\tt bound}^{1/n}
\]
In the four  cases 
{\tt  bound}$^{1/n}$ is of the form 
\[
\text{\tt expression}^{1/n} \left( f^n\right)^{1/n},
\]
where  {\tt expression}$^{1/n}$ goes to $1$ as $n$ goes to infinity
and $f$ is either $\theta$, $\eta$, $\kappa$ or $\psi$.
Thus,  we ignore  {\tt expression}$^{1/n}$.
   Figure~\ref{fig:comparison}  plots the following:

\begin{tabular}{ll}
\multicolumn{2}{l}{In dotted line, the exact value}
\\
&$\displaystyle
\widehat{S}_n^{(j)}=( {S}_n^{(j)}/n^{n-j})^{1/n}.$
\\\\
\multicolumn{2}{l}{The graphic of the function $\theta$ involved in the trivial bound}
\\
& $\displaystyle
\widehat{ S}_n^{(j)}\le \frac{1}{(\sqrt{2\pi j})^{1/n}}\theta(j/n),
\text{ where  $\theta(x)$ is given in~\eqref{theta}.}
$
\\\\
\multicolumn{2}{l}{The graphic of the function $\eta$ involved in  Rennie and Dobson's bound}
\\
&$\displaystyle
\widehat{ S}_n^{(j)}\le \frac{1}{2^{1/n}}\eta(j/n),
\text{ where  $\eta(x)$ is given in~\eqref{eta}.}
$
\\\\
\multicolumn{2}{l}{The graphic of the function $\kappa$ involved in Arratia and DeSalvo's bound}
\\
&$\displaystyle
\widehat{ S}_n^{(j)} \le {2}^{1/n}\kappa(j/n), 
\text{ where  $\kappa(x)$ is given in~\eqref{kappa}.}
$
\\\\
\multicolumn{2}{l}{In stroke gray line,  Bender's estimate}
\\
&$\displaystyle
\widehat{ S}_n^{(j)} \le \left(e^{1/12}\frac{C_1}{\sqrt{2\pi n}}\right)^{1/n}\psi(j/n),$
\\ 
&{where  $\psi(x)$ is given in~\eqref{psimu}
 and $C_1$ in Corollary~\ref{corolarioBender},}
\\
&with 
$j/n\in [r,1-r]$
for any real $r$ such that  $0<r<1/2$. \\
&The   constant $C_1$ depends on $r$. In the plot of Figure~\ref{fig:comparison}, $r=0.1$.  
\end{tabular}

\begin{figure}[h]
\begin{center}
\includegraphics{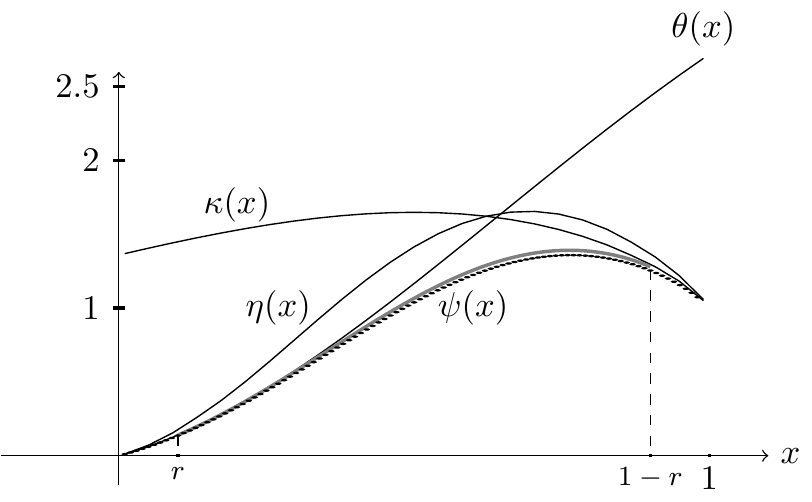}
\end{center}

\caption{Comparison of the normalized Stirling numbers of the second kind $ \widehat{S}_n^{(j)}=( {S}_n^{(j)}/n^{n-j})^{1/n}$ (in dotted line), for $n=100$ and $j=1,..,100$, with the four estimates (in solid lines).
}
\label{fig:comparison}

\end{figure}

\section{Application to our problem}

For the proof of Theorem~\ref{thm:a(n,j)} we must give an upper bounds of 
$a(n,j)$, which is always a positive term.
Since $a(n,j) =   
\binom{n}{j} j!\  S_n^{(j)}$,
 we can use  upper bounds for the  Stirling numbers of the second kind. 
We  choose   Rennie and Dobson's bound in the case $ j/n$ is near $0$ or $1$,
and the bound originated in Bender's estimate when $ j/n$ is in $ [1/r, 1-1/r]$, for $ r>0$.  

\subsection{When the ratio $j/n$ is  near $0$ or $1$}
The next lemma expresses this bound in terms of the ratio $j/n$ with the help of the function
\begin{equation}\label{funcionRennie}
\nu:[0,1]\to \mathbb R,\quad \nu(x)=x \ e^{-x}\varphi(x)^2,
\end{equation}
where   $\varphi(x)$ is defined  in~\eqref{combinatorial}. 

\begin{lemma}\label{nu1} 
For any pair of positive integers $n,j$ such that $n\ge 1$ and $1\le j\le n-1$,
 \[
 a(n,j) n^{-n}  \le     \sqrt{j}\, \nu(j/n)^n.
 \]
\end{lemma}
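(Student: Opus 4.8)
The plan is to start from the exact formula $a(n,j)=\binom{n}{j}\,j!\,S_n^{(j)}$, replace each of the three factors by the upper bounds already established earlier in the excerpt, and then collect the resulting $n$-th powers into a single base that turns out to be $\nu(j/n)$. The three ingredients are: the upper bound on the binomial from~\eqref{binom}, namely $\binom{n}{j}\le \varphi(j/n)^n$; the upper Stirling estimate of Robbins from~\eqref{StirlingRobbins}, which controls $j!$; and Rennie and Dobson's bound~\eqref{Rennie} in the form~\eqref{Rennie-grafico}, giving $\binom{n}{j}j^{n-j}\le \left(n^{1-j/n}\eta(j/n)\right)^n$ with $\eta(x)=x^{1-x}\varphi(x)$. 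The reason to use Rennie--Dobson here (rather than Bender) is exactly the regime advertised in the paragraph before the lemma: the bound is valid and clean for all $1\le j\le n-1$, including $j/n$ near $0$ or $1$, whereas Bender's estimate degenerates there.

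\smallskip

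\noindent\textbf{First} I would write $a(n,j)=\binom{n}{j}\,j!\,S_n^{(j)}$ and bound $S_n^{(j)}$ using Rennie--Dobson~\eqref{Rennie}, $S_n^{(j)}\le \tfrac12\binom{n}{j}j^{n-j}$, so that
\[
a(n,j)\le \tfrac12\,\binom{n}{j}^2\,j!\,j^{n-j}.
\]
\textbf{Next} I would insert $\binom{n}{j}\le\varphi(j/n)^n$ from~\eqref{binom} for one of the two binomial factors, and for the surviving product $\binom{n}{j}j^{n-j}$ I would instead invoke the packaged form~\eqref{Rennie-grafico}. The point of keeping one binomial as $\varphi(j/n)^n$ and the other combined with $j^{n-j}$ is that the $\eta=x^{1-x}\varphi$ and the extra $\varphi$ together will supply the $\varphi(x)^2$ appearing in $\nu$. \textbf{Then} I would handle $j!$ through Robbins' upper bound~\eqref{StirlingRobbins}, $j!\le\sqrt{2\pi}\,e^{1/12}j^{j+1/2}e^{-j}$, and rewrite $j^j e^{-j}$ as an $n$-th power via $j=(j/n)n$, mirroring the manipulation already carried out in~\eqref{tb}: the factor $j^j$ supplies an $n^j$ and a $(j/n)^j$, and $e^{-j}=(e^{-j/n})^n$ supplies the $e^{-x}$ in $\nu$.

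\smallskip

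\noindent\textbf{The bookkeeping step} is to verify that the base of the resulting $n$-th power is exactly $\nu(x)=x\,e^{-x}\varphi(x)^2$ at $x=j/n$, and that all the leftover factors ($n$-powers of $n$ that cancel against $n^{-n}$, and the polynomial-in-$n$ prefactors) collapse into the claimed $\sqrt{j}$. Concretely, after dividing by $n^{-n}$, the powers of $n$ coming from $\varphi^n$ are scale-free, those from $\eta(j/n)^n=\big((j/n)^{1-j/n}\varphi(j/n)\big)^n$ carry the $n^{1-j/n}$ that matches $n^{n-j}$, and the $n^j$ from $j^j$ supplies the missing $n^j$, so the $n$-powers balance to $n^n$. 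The numerical constants $\tfrac12$, $\sqrt{2\pi}\,e^{1/12}$ and $\sqrt{2\pi}$ must be checked to multiply out to at most $1$ so that the clean bound $\sqrt{j}\,\nu(j/n)^n$ holds with no leading constant; this is plausible given that $\tfrac12\cdot\sqrt{2\pi}\,e^{1/12}\cdot(1/\sqrt{2\pi})=\tfrac12 e^{1/12}<1$, and the $\sqrt{j}$ absorbs the single $\sqrt{j}$ produced by $j^{1/2}$ in Robbins while $1/\sqrt{2\pi j}$-type factors are $\le 1$.

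\smallskip

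\noindent\textbf{The main obstacle} I anticipate is purely algebraic rather than conceptual: making the exponents line up so that the base is literally $x\,e^{-x}\varphi(x)^2$ and not some cousin of it, since there are two copies of $\varphi$ (one from each binomial), one copy of $x^{1-x}$ from $\eta$, one copy of $x^{x}$ (with a sign) from $j^j$, and one $e^{-x}$ from $e^{-j}$, and these must fuse into $x\cdot e^{-x}\cdot\varphi(x)^2$. In particular I must track the factor $x=j/n$ that appears as the genuine linear term in $\nu$: it arises because $j^{n-j}\cdot j^{j}=j^{n}=\big((j/n)n\big)^n=(j/n)^n n^n$ contributes a full $(j/n)^n=x^n$, of which $x^{n-j}\cdot x^{j}$ gets partially reorganized, leaving exactly one honest factor of $x$ beyond the two $x^{-x}$ pieces inside the $\varphi$'s, so that the net base is $x\,e^{-x}\varphi(x)^2$. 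Once this reconciliation is confirmed, the inequality $a(n,j)n^{-n}\le\sqrt{j}\,\nu(j/n)^n$ follows directly, and the proof is complete.
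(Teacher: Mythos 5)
Your route is structurally the same as the paper's: write $a(n,j)=\binom{n}{j}\,j!\,S_n^{(j)}$, apply Rennie--Dobson to get $a(n,j)\le\tfrac12\binom{n}{j}^2 j!\,j^{n-j}$, control $j!$ by Robbins, bound the binomial factors through $\varphi$, and reassemble everything into $\nu(j/n)^n=\bigl((j/n)e^{-j/n}\varphi(j/n)^2\bigr)^n$. Your exponent bookkeeping is correct; with the three ingredients you actually cite, the chain yields exactly
\[
a(n,j)\,n^{-n}\ \le\ \tfrac12\,\sqrt{2\pi}\,e^{1/12}\,\sqrt{j}\;\nu(j/n)^n .
\]
The gap is in the constant check. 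You assert the prefactors multiply to $\tfrac12\cdot\sqrt{2\pi}\,e^{1/12}\cdot(1/\sqrt{2\pi})=\tfrac12 e^{1/12}<1$, but none of your ingredients produces that $1/\sqrt{2\pi}$: the weakened bound $\binom{n}{j}\le\varphi(j/n)^n$ and the packaged bound~\eqref{Rennie-grafico} each carry constant $1$, and Robbins' upper bound for $j!$ contributes $\sqrt{2\pi}\,e^{1/12}$ \emph{in the numerator}. (The $1/\sqrt{2\pi j}$-type factors you allude to appear only when $j!$ sits in a denominator, as in~\eqref{tb}; here $j!$ multiplies.) So your prefactor is $\tfrac12\sqrt{2\pi}\,e^{1/12}\approx 1.36>1$, and the clean inequality $a(n,j)n^{-n}\le\sqrt{j}\,\nu(j/n)^n$ claimed by the lemma does not follow from your argument as written.

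The repair is precisely what the paper does: do not discard the constants in~\eqref{binom}. Keeping $\binom{n}{j}\le\sqrt{2}\,C_0\,\varphi(j/n)^n$ with $\sqrt{2}\,C_0=e^{1/12}/\sqrt{\pi}\approx 0.61$ for \emph{both} binomial factors gives the prefactor $\tfrac12(\sqrt{2}C_0)^2\sqrt{2\pi}\,e^{1/12}=e^{1/4}/\sqrt{2\pi}\approx 0.51<1$; even keeping it for just the standalone binomial (and using~\eqref{Rennie-grafico} for the other, as you propose) gives $e^{1/6}/\sqrt{2}\approx 0.84<1$. Either variant closes the proof; the point is that the factors $\sqrt{2}C_0<1$ are exactly what compensate Robbins' $\sqrt{2\pi}\,e^{1/12}$, and your version throws them away.
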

\begin{proof}
Recall that $a(n,j)=
\binom{n}{j} j!\  S_n^{(j)}$.
Rennie and Dobson's  upper bound~\eqref{Rennie}  for $S_n^{(j)}$ yields
\[
 a(n,j)\le   \frac{1}{2} \binom{n}{j}^2 j! j^{n-j}.
\]
 We apply the estimates~\eqref{StirlingRobbins}  for the factorial. Then we use the  upper bound for the binomial coefficient given in~\eqref{binom}  that involves the constant $C_0=e^{1/12}(\sqrt{2\pi})^{-1}$, which yields
  \begin{align*}
    \frac{1}{2} \binom{n}{j}^2 j! j^{n-j}\le& \frac{1}{2}(\sqrt 2 C_0)^2e^{1/12}\sqrt{2\pi} \sqrt{j}\varphi(j/n)^2e^{-j}j^n
    \\
    \le &\frac{e^{1/4}}{\sqrt{2\pi}} \sqrt{j}\, \nu(j/n)^n \\
    \le & \sqrt{j}\, \nu(j/n)^n. 
     \end{align*}
      \end{proof}
The function  $\nu(x)$ is  smooth and  concave, $\nu(0)=0$,
and $\nu(1)=e^{-1}$.  The bound given in  Lemma~\ref{nu1} is tight when  $j/n$ is near $0$ or~$1$. 
However, it is not good when $j/n$ takes values in middle of the interval $[0,1]$. 
In fact, this bound  
satisfies $\sqrt{j}(\nu(1/2))^n\ge  \sqrt{j} (1.1)^n>1$
but we know that $n^{-n}a(n,j)\le 1$  for any choice of $j$ and $n$.
This leads us to consider the only two real numbers $x_0$ and $x_1$  in $[0,1]$ for which $\nu(x_0)=\nu(x_1)=1$ and $x_0<x_1$. 
These numbers are $x_0\approx 0.387$ and $x_1\approx 0.790$. Figure~\ref{Fig:nu varphi} displays the graphs of $\nu(x)$ and $\varphi(x)$. 

\begin{figure}
\begin{center}
\begin{tikzpicture}[scale=2]
\draw[->] (-0.2,0) -- (1.2,0) node[right] {$x$};
\draw[->] (0,-0.2) -- (0,1.4) node[above] {};
\draw[line width=1pt] (1,-.01)--(1,.01)node[below] {$1$};
\draw[line width=1pt] (.01,1)--(-.01,1)node[left] {$1$};
\draw[line width=1pt] (1,-.01)--(1,.01)node[below] {};
\draw[dashed] (1,0.01)--(1,1)node[below] {};
\draw[dashed] (1,1)--(0,1)node[below] {};
\draw[domain=.01:.99] plot (\x,{exp(-\x)*\x*(\x^(-2*\x)*(1-\x)^(2*(\x-1))})
 node[right] {$\nu(x)$};
\end{tikzpicture}\hspace{1cm}
\begin{tikzpicture}[scale=2.5]
\draw[->] (-0.2,0) -- (1.2,0) node[right] {$x$};
\draw[->] (0,-0.2) -- (0,2.2) node[above] {};
\draw[line width=1pt] (1,-.01)--(1,.01)node[below] {$1$};
\draw[line width=1pt] (.01,1)--(-.01,1)node[left] {$1$};
\draw[line width=1pt] (.01,2)--(-.01,2)node[left] {$2$};
\draw[line width=1pt] (1,-.01)--(1,.01)node[below] {};
\draw[dashed] (1,0.01)--(1,2)node[below] {};
\draw[dashed] (1,2)--(0,2)node[below] {};
\draw[domain=.001:.999] plot (\x,{(\x^(-\x)*(1-\x)^((\x-1))})
 node[right] {$\varphi(x)$};
\end{tikzpicture}
\end{center}
\caption{Graphs of functions $\nu(x)$ and $\varphi(x)$.}
\label{Fig:nu varphi}
\end{figure}
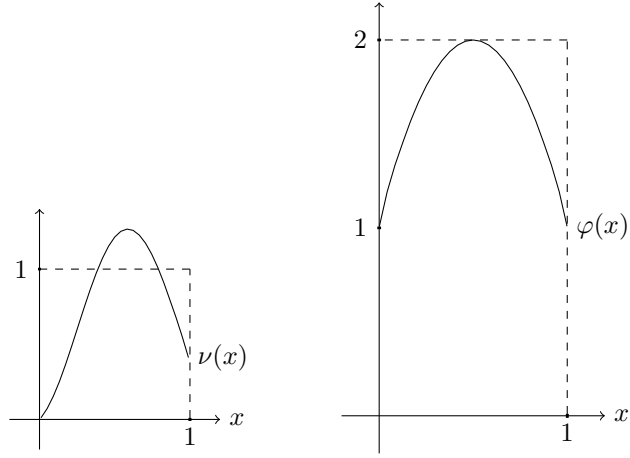

 \begin{lemma}\label{colitas} 
Let $x_0$ and  $x_1$ be such   that  $0<x_0<x_1<1$   and $\nu(x_0)=\nu(x_1)=1$.
For any pair of real numbers $r_0$ and $r_1$ such that   $0<r_0<x_0$ and $x_1<r_1<1$
 there exists a  real  number $\Lambda$ less than $1$,
such that for every positive integer $n$, 
 \[
n^{-n} a(n,j)\le 
 \sqrt{ n}\Lambda^{n} \hbox{, if }   j/n\in [0,r_0]\cup [r_1,1]. 
\]
\end{lemma}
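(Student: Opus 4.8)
The plan is to exploit the concavity of $\nu$ to locate exactly where it stays below $1$, and then feed that into the bound of Lemma~\ref{nu1}, which already gives $a(n,j)\,n^{-n}\le\sqrt{j}\,\nu(j/n)^n$.

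First I would record the qualitative shape of $\nu$ defined in~\eqref{funcionRennie}. Since $\nu$ is concave on $[0,1]$ (as noted after Lemma~\ref{nu1}), its super-level set $\{x\in[0,1]:\nu(x)\ge 1\}$ is convex, hence a closed interval. Because $\nu(0)=0<1$, $\nu(1)=e^{-1}<1$, while $\nu(1/2)>1$, this interval is nondegenerate and interior, and its endpoints are precisely the two solutions $x_0<x_1$ of $\nu(x)=1$. Consequently $\nu(x)<1$ for every $x\in[0,x_0)\cup(x_1,1]$. Moreover, being concave with $\nu(0)=0$ and a peak in $(x_0,x_1)$, the function $\nu$ is increasing on $[0,x_0]$ and decreasing on $[x_1,1]$.

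Next, given $r_0\in(0,x_0)$ and $r_1\in(x_1,1)$, the tail intervals $[0,r_0]$ and $[r_1,1]$ are compact and contained in $[0,x_0)\cup(x_1,1]$, where $\nu$ is strictly below $1$. Using the monotonicity just described, the maximum of $\nu$ over $[0,r_0]\cup[r_1,1]$ is attained at the inner endpoints, so I would set $\Lambda=\max\{\nu(r_0),\nu(r_1)\}$, which satisfies $\Lambda<1$ and $\nu(j/n)\le\Lambda$ whenever $j/n\in[0,r_0]\cup[r_1,1]$. Then for $1\le j\le n-1$ with $j/n$ in the tails, Lemma~\ref{nu1} together with $\sqrt{j}\le\sqrt{n}$ yields $a(n,j)\,n^{-n}\le\sqrt{j}\,\nu(j/n)^n\le\sqrt{n}\,\Lambda^n$, as required. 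The single pair not covered by Lemma~\ref{nu1} is the endpoint $j=n$ (for which $j/n=1\in[r_1,1]$); there $a(n,n)=n!$, and the upper bound in~\eqref{StirlingRobbins} gives $n^{-n}n!\le\sqrt{2\pi}\,e^{1/12}\sqrt{n}\,e^{-n}$, which is of the claimed form because $\nu(1)=e^{-1}<\Lambda$, at the cost of at most adjusting the constant (or discarding the degenerate pair $(n,j)=(1,1)$).

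The step I expect to be the main obstacle is the first one: justifying rigorously that the super-level set of $\nu$ is exactly $[x_0,x_1]$, so that $\nu$ is strictly below $1$ on both tails and monotone on each side of its peak. This rests entirely on the concavity of $\nu$ combined with the three sign evaluations at $0$, $1/2$ and $1$; once that geometry is secured, the remainder is a compactness argument and a direct substitution into Lemma~\ref{nu1}.
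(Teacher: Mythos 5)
Your proof is correct and follows essentially the same route as the paper's: both arguments feed the bound of Lemma~\ref{nu1} into the concavity and resulting monotonicity of $\nu$ on the two tail intervals, and take $\Lambda=\max\{\nu(r_0),\nu(r_1)\}<1$. The only substantive difference is that you also treat the boundary case $j=n$, which Lemma~\ref{nu1} (stated only for $1\le j\le n-1$) does not cover and which the paper's proof silently skips; your observation that the pair $(n,j)=(1,1)$ genuinely violates the stated inequality (since $n^{-n}a(1,1)=1$ cannot be $\le\Lambda<1$), while all other pairs with $j=n$ can be absorbed by enlarging $\Lambda$ below $1$, points to a small defect in the lemma's statement rather than in your argument.
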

\begin{proof}
Lemma~\ref{nu1} says that $a(n,j)n^{-n}\le \sqrt{j}\nu(j/n)^n$. 
The function  $\nu(x)$ is   smooth and concave  with $\nu(0)=0$, and $\nu(1)=e^{-1}$. 
 This implies the existence of unique points $x_0$ and $x_1$ 
 such   that  $0<x_0<x_1<1$   and $\nu(x_0)=\nu(x_1)=1$.
Fix $r_0$ and $r_1$ such that   $0<r_0<x_0$ and $x_1<r_1<1$.
Necessarily,  $\nu(r_0)<1$ and $\nu(r_1)<1$.  
Let $\Lambda_0=\nu(r_0)$ and $\Lambda_1=\nu(r_1)$. 
If $j/n \in[0,r_0]$ then
\[
  \nu(j/n)\le \max\{\nu(x): x\in [0,r_0] \} \le \Lambda_0 . 
\]
Similarly, if $j/n \in [r_1,1]$, we have $\nu(j/n)\le \Lambda_1$. 
Taking $\Lambda=\max\{\Lambda_0,\Lambda_1\}$, 
the lemma is proved.
\end{proof}

\paragraph{Example:} The choice $r_0=0.1$  yields   $\Lambda_0\approx 0.173 $, 
and $r_1=0.9$ yields $\Lambda_1\approx 0.701 $. In Figure~\ref{Fig:Thm1}, 
the value of $\Lambda$ equals the maximum between the approximations of $\Lambda_0$ and $\Lambda_1$.

\subsection{When the ratio $j/n$ is not near $0$ nor $1$}

We introduce the function 
\begin{equation}\label{phi}
\phi:(0,1)\mapsto \mathbb R, \ \ \ \   \phi(x)=(e\ln(1+e^{-\delta(x)}))^{-1}\varphi(x)e^{-x\delta(x)}
\end{equation}
where $\varphi(x)$ is defined in~\eqref{combinatorial} and $\delta(x)$ is defined in ~\eqref{alfa}. 

\begin{lemma}  \label{lemma:a(n,j)}
Consider the  constants $c_1$ and $C_1$  in  Corollary~\ref{corolarioBender}. 
For any real number $r$ such that $0<r<1/2$,  
and for any pair of positive integers $n,j$ such that  $j/n\in [r,1-r]$
\[
 \frac{{e^{-1/6}}c_1}{\sqrt{2\pi (n-j)}}\phi(j/n)^n \le  n^{-n} a(n,j)\le \frac{e^{1/6}C_1}{\sqrt{2\pi (n-j)}} \phi(j/n)^n .
\]
\end{lemma}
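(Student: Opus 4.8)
The plan is to begin from the elementary simplification
\[
a(n,j)=\binom{n}{j}\,j!\,S_n^{(j)}=\frac{n!}{(n-j)!}\,S_n^{(j)},
\]
so that the two factors $\binom{n}{j}$ and $j!$ merge into a single ratio of factorials. This is the decisive step: applying Stirling's estimate \eqref{StirlingRobbins} directly to $n!$ and $(n-j)!$, rather than routing through the three factorials hidden in the binomial bound \eqref{binom}, is precisely what yields the prefactor $1/\sqrt{2\pi(n-j)}$ and the exponent $e^{\pm 1/6}$ that appear in the statement.

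First I would estimate $n!/(n-j)!$ from above and below by \eqref{StirlingRobbins}. Writing $x=j/n$ and using $n-j=n(1-x)$, one collects the powers of $n$ via $n^n/(n-j)^{n-j}=(n^{x}(1-x)^{-(1-x)})^{n}$ and $e^{-j}=(e^{-x})^{n}$, so that the ratio of the leading Stirling terms equals $\sqrt{n/(n-j)}\,(n^{x}g(x))^{n}$ with $g(x):=(1-x)^{-(1-x)}e^{-x}$; the correction factors $e^{r_n}$ contribute at most one $e^{\pm 1/12}$ on each side. Next, since $j/n\in[r,1-r]$, Corollary~\ref{corolarioBender} bounds $S_n^{(j)}$ between $e^{-1/12}\tfrac{c_1}{\sqrt{2\pi n}}(n^{1-x}\psi(x))^{n}$ and $e^{1/12}\tfrac{C_1}{\sqrt{2\pi n}}(n^{1-x}\psi(x))^{n}$. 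Multiplying the two estimates, the powers of $n$ recombine cleanly as $(n^{x}g(x))^{n}(n^{1-x}\psi(x))^{n}=n^{n}(g(x)\psi(x))^{n}$, the scalar prefactors combine as $\sqrt{n/(n-j)}\cdot(2\pi n)^{-1/2}=(2\pi(n-j))^{-1/2}$, and the exponential corrections accumulate to $e^{\pm 1/6}$; this already produces $n^{-n}a(n,j)$ sandwiched between $\tfrac{e^{-1/6}c_1}{\sqrt{2\pi(n-j)}}(g\psi)^{n}$ and $\tfrac{e^{1/6}C_1}{\sqrt{2\pi(n-j)}}(g\psi)^{n}$.

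It then remains to verify the algebraic identity $g(x)\psi(x)=\phi(x)$, which is the only genuinely computational point. Substituting the definition \eqref{psimu} of $\psi$, the product reads $(1-x)^{-(1-x)}e^{-x}\cdot\frac{e^{-((1-x)+x\delta(x))}}{x^{x}\ln(1+e^{-\delta(x)})}$; here $e^{-x}e^{-(1-x)}=e^{-1}$ and $(1-x)^{-(1-x)}/x^{x}=x^{-x}(1-x)^{-(1-x)}=\varphi(x)$, which collapse the expression to $\frac{\varphi(x)e^{-x\delta(x)}}{e\ln(1+e^{-\delta(x)})}=\phi(x)$ by the definition \eqref{phi}. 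I do not expect a serious obstacle beyond careful bookkeeping of the multiplicative corrections $e^{\pm 1/12}$ and of the $\sqrt{\cdot}$ prefactors; the range $1\le j\le n-1$ needed for \eqref{StirlingRobbins} and for Corollary~\ref{corolarioBender} is guaranteed by $j/n\in[r,1-r]$ with $r>0$, since then $j\ge rn\ge 1$ and $n-j\ge rn\ge 1$.
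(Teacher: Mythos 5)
Your proposal is correct and follows exactly the paper's own route: write $a(n,j)=\frac{n!}{(n-j)!}\,S_n^{(j)}$, bound the factorial ratio by the Stirling estimates \eqref{StirlingRobbins}, bound $S_n^{(j)}$ by Corollary~\ref{corolarioBender}, and collapse the product to $\phi(j/n)^n$ via the definition of $\varphi$. Your write-up simply makes explicit the bookkeeping (the $e^{\pm 1/6}$ corrections, the prefactor $1/\sqrt{2\pi(n-j)}$, and the identity $g\psi=\phi$) that the paper's one-line proof leaves to the reader.
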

\begin{proof}
Write $a(n,j)=    S_n^{(j)} n!/(n-j)!$, 
then use  Stirling estimates ~\eqref{StirlingRobbins} for the factorial, apply Corollary~\ref{corolarioBender}
and use the definition of $\varphi(x)$ given in~\eqref{combinatorial}.
\end{proof}

 The function $\phi(x)$ is displayed in Figure~\ref{Fig:Thm1}. It  is smooth,  concave, $\phi(0)=0$ and  $\phi(1)=e^{-1}$.   
 The auxiliary function $\delta(x)$ takes the value $-\ln(e-1)$ at $x=1-1/e$ and then, $\phi(1-1/e)=1$. 
This value is the maximum of $\phi(x)$ because the lower bound given in Lemma~\ref{lemma:a(n,j)}
 implies that $\phi(x)\le 1$ for~$x\in (0,1)$.

\subsection{Proofs of Theorem~\ref{thm:a(n,j)} and Corollary~\ref{cor:sum}}

 Theorem~\ref{thm:a(n,j)}   considers the ratio between $j$ and~$n$.
The proof combines the two cases we just  studied: 
 when $j/n$ is near  $0$ or~$1$, and 
 when $j/n$ is in a central  interval away from $0$ and~$1$.

\begin{proof}[Proof of Theorem~\ref{thm:a(n,j)}] 
Consider the function $\nu$ given in \eqref{funcionRennie}.
Pick numbers $x_0$ and $x_1$ such that $0 < x_0 < x_1 < 1$ and  $\nu(x_0)=\nu(x_1)=1$.
 Take any $r\in (0,1/2)$ so that 
$ r \le \max \{x_0, 1-x_1\}$. If  $j/n\in[r,1-r]$
apply Lemma~\ref{lemma:a(n,j)}. 
Otherwise, apply  
Lemma~\ref{colitas}.
\end{proof}

The proof of  Corollary~\ref{cor:sum} is immediate from the statement of Theorem~\ref{thm:a(n,j)}.

\begin{proof}[Proof of Corollary~\ref{cor:sum}]
The result is a direct application of  Theorem~\ref{thm:a(n,j)}
because 
 \[
\max\{n^{-n} a(n,j), 1\le j\le \ell\}\le n^{-n}\sum_{j=1}^\ell a(n,j)\le n \max\{n^{-n} a(n,j), 1\le j\le \ell\}.
\]
  \end{proof}

\noindent
{\bf Acknowledgements}.  We thank an anonymous referee  for noticing an error in  the lower bound  given in Theorem 1 in a previous version of this paper and for many  valuable suggestions.

\end{document}